\newtheorem{teorema}{Theorem}[section]
\newtheorem{lema}[teorema]{Lemma}
\newtheorem{corolario}[teorema]{Corollary}
\newtheorem{proposicion}[teorema]{Proposition}
\theoremstyle{definition}
\newtheorem{remark}[teorema]{Remark}
\newtheorem{ejemplo}[teorema]{Example}
\def\div{\mbox{div}\,}
\def\grad{\mbox{grad}\,}
\def\C{\mathbb{C}}
\def\D{\mathbb{D}}
\def\T{\mathbb{T}}
\def\R{\mathbb{R}}
\begin{document}

\title{Bounded Extremal Problems in Bergman and Bergman-Vekua spaces}
\author{Briceyda B. Delgado\thanks{Regional mathematical center of Southern Federal University, Bolshaya Sadovaya, 105/42, Rostov-on-Don, 344006, Russia, briceydadelgado@gmail.mx} \and Juliette Leblond\thanks{INRIA Sophia Antipolis M\'editerran\'ee, Team FACTAS, BP 93, 06902 Sophia Antipolis, France,  juliette.leblond@inria.fr} }
\date{}
\maketitle
\begin{abstract}
 We analyze Bergman spaces $A^p_f(\D)$ of generalized analytic functions of solutions to the Vekua equation $\overline{\partial}w=(\overline{\partial}f/f)\overline{w}$ in the unit disc of the complex plane, for Lipschitz-smooth non-vanishing real valued functions $f$ and $1 < p < \infty$.
  We consider a family of bounded extremal problems (best constrained approximation) in the Bergman space $A^p(\D)$ and in its generalized version $A^p_f(\D)$, that consists in approximating a function in  subsets of $\D$ by the restriction of a function belonging to $A^p(\D)$ or $A^p_f(\D)$ subject to a norm constraint. Preliminary constructive results are provided for $p=2$.
\end{abstract}
\mbox{}\\
\noindent \textbf{Keywords:} Generalized holomorphic functions; pseudo-holomorphic / pseudo-analytic \\ functions; Bergman spaces; Vekua equation; Bergman-Vekua spaces; bounded extremal \\problems.\\ 
\mbox{}\\
\noindent \textbf{Classification:} 30G20, 30H20, 30E10, 41A29. 


\section{Introduction}

Solutions to the general Vekua equation $\overline{\partial}w=\alpha {\overline{w}+\beta w}$, whose theory was introduced in \cite{Bers1953,Vekua1959} in the complex plane, are the so-called pseudo-holomorphic (or generalized holomorphic, or analytic) functions, see also \cite[Ch. 6]{Colton1980}. In this work, we study the Bergman spaces $A_f^p(\D)$ 
of generalized holomorphic functions 
solutions to $\overline{\partial}w=\alpha_f\overline{w}$ in the unit disc $\D$, with $\alpha_f=\overline{\partial}f/f=\overline{\partial}\log f$, where $f$ is a non-vanishing Lipschitz-smooth real valued function, whence $\alpha_f$ is uniformly bounded in $\D$, 
and establish some of their basic properties.  Whenever $w=w_0+i w_1$ satisfies  $\overline{\partial}w=\alpha_f  
\,\overline{w}$, the real and imaginary parts of $w_0/f+i fw_1$ are solutions to the elliptic conductivity equation $\nabla \cdot \left( \sigma \nabla u \right) = 0$ with conductivities $\sigma = f^2$ and $1/f^2$, respectively, see e.g. \cite{BLRR2010, Krav2009}. This generalizes the links between holomorphic and harmonic functions.

In \cite{BLRR2010}, Hardy spaces of generalized holomorphic functions were studied on $\D$ and on Dini-smooth simply connected domains 
(see also references therein and \cite{BFL2011} for an extension to finitely connected domains). Generalized Smirnov-Hardy spaces of Lipschitz simply connected domains were further considered in \cite{BBL2016}.  Boundary value problems were considered as well in \cite{BBL2016, BFL2011, BLRR2010} either of Dirichlet or Neumann type, or from partial overdetermined Cauchy type data, which is handled through best constrained approximation problems (bounded extremal problems, {\sl BEP}) in these Hardy classes.
In \cite[Th.\ 4.2]{Hen2002}, necessary and sufficient conditions to guarantee the uniqueness of the solution to Dirichlet problem in Bergman and Sobolev spaces of Lipschitz domains were provided.

The present work is an extension to Bergman spaces $A^p(\D)$ and generalized Bergman spaces $A_f^p(\D)$, $1<p<\infty$, of results obtained for Hardy spaces $H^p(\D)$ in \cite{ABL1992, BLP1996, BL1998}, and for Hardy spaces of  generalized holomorphic functions in \cite{FLPS2011}. More precisely, we formulate and solve best constrained approximation problems in these Bergman spaces. They could furnish an appropriate setting to formulate inverse problems for some elliptic partial differential equations (PDE) in $\D$, or conformally equivalent smooth enough domains, of conductivity or Shr\"odinger type, when not only partial boundary Cauchy data are available but also some measurements within the domain.

The bounded extremal problems {\textsl{$f$-BEP}} under study are as follows. Being given a partition $\D$ in two  (non-empty) domains $K\subseteq \D$ and $J=\D\setminus \overline{K}$, let $h_K\in L^p(K)$ and $h_J\in L^p(J)$, $1 < p < \infty$. We aim at finding a function belonging to  $A_f^p(\D)$ whose distance to $h_{J}$ in $L^p(J)$ does not exceed some fixed positive real-valued quantity and which is as close as possible to $h_{K}$ in  $L^p(K)$ under this constraint. Whenever $f\equiv 1$, it holds that $A_f^p(\D)=A_0^p(\D)=A^p(\D)$, and we will refer to the corresponding problem as {\textsl{BEP}}. 

  This work is structured as follows. In Section \ref{sec:preliminaries}, we recall a few classical results concerning Bergman and Hardy spaces, mostly on the disc, but on other domains as well. Section \ref{sec:Bergman-Vekua spaces} contains an analysis of the Bergman-Vekua spaces $A_f^p(\D)$ of pseudo-holomorphic functions.
In Section \ref{sec:BEP-Bergman}, we consider bounded extremal problems {\textsl{BEP}} in Bergman spaces $A^p(\D)$ and related constructive issues for the case $p=2$, for which an explicit formula is obtained in terms of the Bergman projection. 
We also provide a preliminary study of the corresponding problem {\textsl{$f$-BEP}} in generalized Bergman-Vekua spaces $A_f^p(\D)$.
Finally, a brief conclusion is given in Section \ref{sec:conclu}.

\section{Preliminaries}\label{sec:preliminaries}
We first introduce some notation, then recall some results concerning Bergman and Hardy spaces, as well as some density properties in these spaces of analytic functions. 

\subsection{Notation}

Let $C^{0,\gamma}(\overline{\D})$ be the space of H\"older-continuous functions on the closure $\overline{\D}$ of $\D$, with exponent $0<\gamma<1$. 
For $1 < p\leq \infty$, the classical Sobolev space $W^{1,p}(\D)$ denotes the space of functions in $L^p(\D)$ such that their derivatives (in the distribution sense) also belong to $L^p(\D)$, see \cite{Fournier1978,Brezis2011} ($W^{1,\infty}(\D)$ is the space of Lipschitz-smooth functions on $\D$).
Let $W_0^{1,p}(\D)$ be the closure of the set of $C^\infty$-smooth functions with compact support in $\D$, taken in  $W^{1,p}(\D)$.

A domain $\Omega$ is a Lipschitz domain if its boundary $\partial \Omega$ can be viewed as the graph of Lipschitz-smooth function up to a rotation and if the domain is on one side of its boundary only, see \cite[Sec.\ 1.1]{Grisvard1985}, \cite{Pommerenke1992}.

{Let $\chi_\Omega$ be} the characteristic function of $\Omega \subset \D$. For $h_1$ defined on $\Omega$ and $h_2$ defined on $\D \setminus {\overline{\Omega}}$, let $h_1\vee h_2$ be the function defined on $\D$ which coincides with $h_1$ on $\Omega$ and with $h_2$ on $\D \setminus {\overline{\Omega}}$.

On $\C \simeq \R^ 2$, define the derivative $\overline{\partial}$ with respect to $\bar{z}$, by $\overline{\partial} = 1/2(\partial_x + i \partial_y)$, where $\partial_x$ and $\partial_y$ are the partial derivatives w.r.t. the variables $x$ and $y$, respectively, and similarly for the derivative w.r.t. ${z}$, ${\partial} = 1/2(\partial_x - i \partial_y)$.
We will designate the operators $\div$ and $\grad$ by $\nabla \cdot$ and $\nabla$, respectively, for $\nabla = (\partial_x, \partial_y)$.

\subsection{Bergman spaces}\label{subsec:Bergman spaces}
The theory of Bergman spaces in this preliminary subsection is based on \cite[Chap.\ 1, Sec.\ 1.1, 1.2]{zhu2000}. Other references are 
\cite{Alpay2015,DuSch2004}.
Let $dA(z)=\frac{1}{\pi}dx dy=\frac{1}{\pi}r dr d\theta$ be the normalized Lebesgue measure on the unit disc $\D \subseteq \C$, with $z=x+iy=re^{i\theta}$. For $1\leq p<\infty$, the \textit{Bergman space} $A^p(\D)$ is the subspace of analytic functions in $L^p(\D)=L^p(\D,dA)$ with the usual norm given by:
\begin{align*}
   \|g\|_{A^p(\D)}^p=\|g\|_{L^p(\D)}^p=\int_{\D}{|g(z)|^p \, dA(z)} =\frac{1}{\pi}\int_0^1{\left(\int_0^{2\pi}{|g(re^{i\theta})|^p \, d\theta}\right) r\, dr}.
\end{align*}
By \cite[Prop.\ 1.2]{zhu2000}, the Bergman space $A^p(\D)$ is closed in $L^p(\D)$ (for $0<p<\infty$).
The following integral operator $P$ is the orthogonal projection from $L^2(\D)$ onto $A^2(\D)$ \cite[Prop.\ 1.4]{zhu2000}:
\begin{align}\label{eq:Bergman_projection}
   P\,g(z)=\int_{\D}{\frac{g(\zeta)}{(1-z\bar{\zeta})^2}\, dA(\zeta)},
\end{align}
for all $z\in \D$ and for all $g\in L^2(\D)$. Moreover, $P$ is a bounded projection operator from $L^2(\D)$ to $A^2(\D)$. The operator $P$ is the known \textit{Bergman projection} over $\D$ and for every $z, \zeta \in \D$, the function $K(z,\zeta)=1/(1- \bar{z} \zeta)^2$ is called the Bergman reproducing kernel on $\D$. 
The following orthogonal decomposition holds (see \cite[Lemma\ 2.1]{Hen2002}):
\[
   L^2(\D)=A^2(\D)\oplus \partial W_0^{1,2}(\D),
\]
Although the Bergman projection $P$ is originally defined on $L^2(\D)$, 
the integral formula can be extended to $L^1(\D)$, using that $A^2(\D)$ is dense in $A^1(\D)$. Furthermore, for $1<p<\infty$, $P$ is a bounded projection operator from $L^p(\D)$ onto $A^p(\D)$, see \cite[Th.\ 1.10]{zhu2000}.

We may also analyze Bergman spaces $A^p(\Omega)$ for any bounded domain (open connected set) $\Omega\subseteq \C$, that consist in the space of all analytic functions in $\Omega$ such that:
\begin{align*}
   \|g\|_{A^p(\Omega)}^p=\|g\|_{L^p(\Omega)}^p= \int_{\Omega}{|g(z)|^p \, dA(z)} < \infty,
\end{align*}
see \cite{Bergman1950, Hen2002}. Notice that $A^p(\Omega)$ is non-empty if $\Omega$ has at least one boundary component that consists of more than a single point.
Moreover, $A^p(\Omega)$ is closed in $L^p(\Omega)$. 

More is needed for $A^p(\Omega)$ to be non-trivial, like $\C \setminus \Omega$  thick enough, see \cite{Hen2002} and references therein. 

In the development of this work, we will use the well-known following uniqueness principle for analytic functions: 
\begin{lema}(\cite[Th.\ 10.18]{Rudin1987}, \cite[Th.\ 9.15]{GuHaSpr2008}). \label{prop:Identity principle}
Let $\Omega$ be a domain of $\C$. Let $g$ be an analytic function in $\Omega$, and 
$   Z(g)=\left\{z\in \Omega \colon g(z)=0\right\}$ be 
the set of zeros of $g$. Then either $Z(g)=\Omega$ or $Z(g)$ has no limit point in $\Omega$. 
\end{lema}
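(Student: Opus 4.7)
The plan is to prove the dichotomy by the classical connectedness argument. I would introduce the set
\[
E = \{ z \in \Omega : g \equiv 0 \text{ on some open neighborhood of } z \text{ contained in } \Omega \},
\]
and show that $E$ is both open and closed in $\Omega$. Since $\Omega$ is a domain (connected), this forces $E = \emptyset$ or $E = \Omega$. In the latter case $Z(g) = \Omega$; in the former case I will argue that $Z(g)$ cannot admit a limit point in $\Omega$, since any such limit point would end up in $E$ by the closedness step, contradicting $E = \emptyset$.

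Openness of $E$ is immediate from the definition: if $z_0 \in E$ and $g$ vanishes on an open ball $B \subset \Omega$ centred at $z_0$, then every point of $B$ inherits such a neighborhood, so $B \subset E$. The substantive step is therefore the closedness of $E$, for which I would use the Taylor expansion of $g$. Let $z_0 \in \Omega$ be a limit point of $Z(g)$ (this includes the case where $z_0$ is a limit point of $E \subset Z(g)$). Pick $r > 0$ with $\overline{B(z_0,r)} \subset \Omega$; analyticity of $g$ on $\Omega$ provides the convergent representation
\[
g(z) = \sum_{n \geq 0} a_n (z - z_0)^n, \quad a_n = \frac{g^{(n)}(z_0)}{n!},
\]
valid for $z \in B(z_0, r)$. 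If not every $a_n$ is zero, let $N$ be the least index with $a_N \neq 0$ and write $g(z) = (z - z_0)^N h(z)$ with $h$ analytic on $B(z_0,r)$ and $h(z_0) = a_N \neq 0$. Continuity of $h$ then provides a neighborhood of $z_0$ on which $h$ does not vanish, so $z_0$ would be an isolated zero of $g$, contradicting the hypothesis that $z_0$ is a limit point of $Z(g)$. Hence all $a_n$ vanish, $g \equiv 0$ on $B(z_0, r)$, and $z_0 \in E$.

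Combining openness and closedness with the connectedness of $\Omega$ yields the stated dichotomy. I do not expect a genuine obstacle: the one nontrivial ingredient is the Taylor-series step, where analyticity is used in an essential way to convert the \emph{qualitative} hypothesis ``$z_0$ is a limit point of zeros'' into the \emph{quantitative} conclusion ``all Taylor coefficients at $z_0$ vanish''. Everything else is point-set topology on the connected open set $\Omega$.
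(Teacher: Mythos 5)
Your argument is correct and is precisely the classical proof of the identity principle found in the paper's own references (it is essentially \cite[Th.\ 10.18]{Rudin1987}); the paper itself states this lemma without proof, citing those sources. Both the open-and-closed decomposition via the set $E$ and the Taylor-coefficient step converting ``limit point of zeros'' into ``$g\equiv 0$ near $z_0$'' are exactly the standard route, so there is nothing to flag.
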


\subsection{Hardy spaces}\label{subsec:Hardy spaces}

Now we will see some aspects of the classical theory of Hardy spaces on the unit disc. For references about Hardy spaces consult \cite[Ch. 20]{Conway1995}, \cite{Duren1970, Garnett1981, Koosis1980}. 
For $1 \leq p<\infty$,  the \textit{Hardy space} $H^p(\D)$ is the space of  analytic functions $g \colon \D\to \C$ such that:
\begin{align*}
  \|g\|_{H^p(\D)}=\sup_{0<r<1} \left[\frac{1}{2\pi}\int_0^{2\pi}
	{\left|g\left(re^{i\theta}\right)\right|^{p} \, d\theta} \right]^{1/p}  <\infty.
\end{align*}
Moreover, every analytic function $g\in H^p(\D)$ has a radial limit \cite[p.\ 70]{Koosis1980}, that is $g_b(e^{i\theta}):=\lim_{r\rightarrow 1^-} g(re^{i\theta})$ exists for almost all $e^{i\theta}$ in $\T = \partial \D$.
The Hardy space $H^p(\D)$ is a closed subspace of $L^p(\T)$, $p\geq 1$, \cite[Th.\ 1.5]{Conway1995}. Further, since $\|g\|_{A^p(\D)} \leq \|g\|_{H^p(\D)}$, $H^p(\D) \subset A^p(\D)$ continuously. Analogously to \eqref{eq:Bergman_projection}, there exists an orthogonal projection $P_{H^2} \colon L^2(\T)\to H^2(\D)$ given by:
\begin{align*}
  P_{H^2}g(z)=\frac{1}{2\pi}\int_0^{2\pi} {\frac{g(e^{i\theta})}{1-ze^{-i\theta}} \, d \theta}=\frac{1}{2i\pi}\int_\T {\frac{g(\zeta)}{\zeta-z}} \, d \zeta \, .
\end{align*} 
In particular, the reproducing kernel on $H^2(\D)$ is given by $K_{H^2}(z, \zeta) = 1/(1-\bar{z} \, \zeta)$.
\begin{remark}\label{rmk:radial-limit-Bergman}
  One important difference between the Hardy and Bergman spaces is that functions in the Bergman space $A^p(\D)$ do not necessarily have radial limit or non-tangential limit almost everywhere on $\T$. In fact, there exists a function in $A^2(\D)$ which fails to have radial limits at every point of $\T$ \cite{Mac1962}.
\end{remark}
%
The projection operator $P_{H^2}$ can also be extended from $L^p(\T)$ to $H^p(\D)$ whenever $1 < p < \infty$ (as for the projection $P$ from $L^p(\D)$ to $A^p(\D)$). 
In Hardy spaces as in Bergman spaces, the uniqueness property holds true from (non-empty) open subsets of $\D$ (see Lemma \ref{prop:Identity principle}). However, for Hardy spaces $H^p(\D)$, it remains valid 
on subsets of $\T$ of positive measure.

\subsection{Bergman and Hardy spaces, density results}\label{subsec:density-results}

The following significant density result of polynomials holds in Bergman classes:
\begin{proposicion}\label{prop:densitypolynew}
  Let $\Omega \subset \C$ be a bounded simply connected Lipschitz domain. Then the set $\mathcal{P}$ of polynomials is dense in $A^p(\Omega)$, for $1 \leq p < \infty$.
\end{proposicion}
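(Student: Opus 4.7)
The plan is to combine the Riemann mapping theorem with Mergelyan's polynomial approximation theorem, using a conformal dilation to produce approximants that are continuous up to the boundary.

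Since $\Omega$ is bounded, simply connected and Lipschitz, its boundary is a Jordan curve; by Carath\'eodory's theorem the Riemann map $\varphi\colon \D \to \Omega$ extends to a homeomorphism $\overline{\D} \to \overline{\Omega}$. For $g \in A^p(\Omega)$ and $r \in (0,1)$, I introduce the conformal dilation
\[
g_r(z) \;=\; g\bigl(\varphi(r\,\varphi^{-1}(z))\bigr), \qquad z \in \Omega.
\]
Then $g_r$ is analytic on $\Omega$ and, because its range lies in the relatively compact subdomain $\varphi(r\overline{\D}) \Subset \Omega$ on which $g$ is continuous, it extends continuously to $\overline{\Omega}$.

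The central analytic step is to prove $g_r \to g$ in $A^p(\Omega)$ as $r \to 1^-$. Pointwise convergence on $\Omega$ is immediate from continuity of $g$. After the change of variables $z = \varphi(w)$, whose Jacobian is $|\varphi'(w)|^2$, the norm becomes
\[
\|g - g_r\|_{L^p(\Omega)}^p \;=\; \int_{\D} |G(w) - G(rw)|^p \, |\varphi'(w)|^2 \, dA(w),
\]
with $G = g \circ \varphi$. Noting that $|G|^p|\varphi'|^2 \in L^1(\D)$, with integral equal to $\|g\|_{A^p(\Omega)}^p$, I would deduce convergence to $0$ by a dominated convergence argument, relying on the boundary regularity of $\varphi$ for Lipschitz domains (for example $\varphi' \in H^q(\D)$ for some $q > 1$, as in Pommerenke) to dominate the dilated weight uniformly in $r$ close to $1$.

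Once $g_r \to g$ in $A^p(\Omega)$ is established, Mergelyan's theorem finishes the argument: since $\Omega$ is simply connected and bounded, $\C \setminus \overline{\Omega}$ is connected, so every function continuous on $\overline{\Omega}$ and analytic on $\Omega$ is a uniform limit on $\overline{\Omega}$ of polynomials. Applied to $g_r$, this produces polynomials $P_n$ with $\|g_r - P_n\|_{L^\infty(\overline{\Omega})} \to 0$, and since $\Omega$ has finite area, uniform convergence passes to $L^p(\Omega)$-convergence. A standard $\varepsilon/2$ triangle inequality then yields polynomial approximants of $g$ itself. The main obstacle is the weighted $L^p$-convergence of the conformal dilation: the factor $|\varphi'|^2$ prevents one from directly invoking the standard dilation result in $A^p(\D)$ applied to $h = (g\circ\varphi)(\varphi')^{2/p}$, and the boundary regularity of $\varphi$ specific to Lipschitz domains is what makes the passage to the limit go through.
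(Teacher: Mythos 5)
Your strategy---conformal inner dilation followed by Mergelyan---is the classical Farrell--Markushevich route, and it is genuinely different from the paper's proof, which merely observes that a bounded simply connected Lipschitz domain has a rectifiable Jordan boundary, hence is a Carath\'eodory domain, and then cites the known polynomial density theorem for such domains \cite{Qiu}. The problem is that the step you yourself call ``the central analytic step'', namely $g_r\to g$ in $A^p(\Omega)$, is precisely the nontrivial content of that cited theorem, and your justification of it does not go through. Dominated convergence needs an $r$-uniform integrable majorant of $|G(w)-G(rw)|^p\,|\varphi'(w)|^2$, hence (after $|a-b|^p\le 2^{p-1}(|a|^p+|b|^p)$) of $|G(rw)|^p\,|\varphi'(w)|^2$. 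The only quantitative information available is $\int_{\D}|G|^p|\varphi'|^2\,dA<\infty$, i.e.\ $h=G\,(\varphi')^{2/p}\in A^p(\D)$, and then $|G(rw)|^p|\varphi'(w)|^2=|h(rw)|^p\,|\varphi'(w)|^2/|\varphi'(rw)|^2$. Neither factor is controllable uniformly in $r$: the radial maximal function $\sup_{0<r<1}|h(rw)|$ of a Bergman function need not have integrable $p$-th power (Bergman functions can fail to have radial limits at every boundary point, cf.\ Remark \ref{rmk:radial-limit-Bergman}), and the ratio $|\varphi'(w)|/|\varphi'(rw)|$ is not uniformly bounded as $|w|\to 1^-$, since $w$ and $rw$ drift apart in the hyperbolic metric there. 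Knowing $\varphi'\in H^q(\D)$ for some $q>1$ gives integrability of the weight but does not produce a majorant for the dilated integrand. So the proof has a gap exactly at its crux.

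It is worth seeing why the difficulty is structural. In the disc, $\|h_r-h\|_{A^p(\D)}\to 0$ is proved not by domination but by the monotonicity of the integral means $M_p(h,\rho)$ in $\rho$, which yields $\|h_r\|_{A^p(\D)}\le\|h\|_{A^p(\D)}$ and a uniform-smallness estimate near $\T$; the weight $|\varphi'(w)|^2$ destroys this radial monotonicity, which is why the standard proofs of the Farrell--Markushevich theorem instead dilate the \emph{domain} from outside and use Carath\'eodory kernel convergence of the associated conformal maps. The cleanest repair is therefore the paper's: reduce to the Carath\'eodory-domain property and invoke the known theorem; alternatively, if you want a self-contained argument, you must actually carry out the kernel-convergence proof rather than appeal to dominated convergence. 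A minor additional slip: $\C\setminus\overline{\Omega}$ is connected because $\partial\Omega$ is a Jordan curve, not merely because $\Omega$ is bounded and simply connected (a thickened, nearly closed annular arc is bounded and simply connected, yet the complement of its closure is disconnected); for your Mergelyan step this does not matter here, since the Lipschitz hypothesis does give a Jordan boundary.
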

\begin{proof}
In particular, $\partial \Omega$ is a rectifiable Jordan curve whence $\Omega$ is a Carath\'e\-odory domain, see \cite{BBL2016, Grisvard1985, Pommerenke1992}. The result then follows from \cite[Sec.\ 5]{Qiu}. 
\end{proof}
As a consequence of Mergelyan's theorem \cite[Th.\ 20.5]{Rudin1987} or Nehari's criteria, we also have:
\begin{lema} (\cite[Ch.\ 10]{Alpay2015}). \label{lem:Mergelyan}
  Let $\Omega \subset \C$ be a bounded domain (resp. compact subset) such that $\C \setminus \Omega$ is connected (resp. the closure of a domain). Then the set $\mathcal{P}$ of polynomials is dense in $A^2(\Omega)$.
\end{lema}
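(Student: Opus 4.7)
I would prove both cases by invoking Mergelyan's theorem (cited in the excerpt as \cite[Th.~20.5]{Rudin1987}), which provides uniform polynomial approximation on a compact set with connected complement, and then converting uniform approximation to $A^2$ approximation through the pointwise bound
\[
\|g-p\|_{L^2(E)} \le \|g-p\|_{L^\infty(E)} \sqrt{A(E)}
\]
valid on any bounded set $E$.

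For the compact case, $\C\setminus\Omega$ being the closure of a domain is in particular connected, so Mergelyan applies on $\Omega$: every function continuous on $\Omega$ and holomorphic in $\mathrm{int}(\Omega)$ is a uniform -- hence $L^2(\Omega)$ -- limit of polynomials. The task therefore reduces to showing that members of $A^2(\Omega)$ that are continuous on all of $\Omega$ form an $L^2$-dense subset of $A^2(\Omega)$; this can be done by exhausting $\mathrm{int}(\Omega)$ by compact subsets, mollifying $g$ away from the boundary, and passing to the limit via dominated convergence (using the interior regularity of holomorphic functions on the interior of $\Omega$).

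For the domain case, $\C\setminus\Omega$ connected together with boundedness of $\Omega$ forces $\Omega$ to be simply connected, so one can construct an exhaustion $K_n\nearrow\Omega$ by compact sets with each $\C\setminus K_n$ connected (for example by taking $K_n=\phi(\overline{D(0,1-1/n)})$ for a Riemann map $\phi:\D\to\Omega$). On each $K_n$, $g$ is continuous, and Mergelyan produces a polynomial $p_n$ with $\sup_{K_n}|g-p_n|<\varepsilon_n$ arbitrarily small; the inequality above then yields $\|g-p_n\|_{L^2(K_n)}\le\varepsilon_n\sqrt{A(\Omega)}$. By dominated convergence $\|g\|_{L^2(\Omega\setminus K_n)}\to 0$ as well.

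The hard part is controlling $\|p_n\|_{L^2(\Omega\setminus K_n)}$, since uniform approximation on $K_n$ does not by itself bound the polynomial on the shell $\Omega\setminus K_n$. I would handle this by a sandwich construction: choose an enlarged compact $\widetilde K_n$ with $K_n\subset\widetilde K_n\subset\Omega$, still with connected complement, apply Mergelyan on $\widetilde K_n$ to get a polynomial satisfying $\sup_{\widetilde K_n}|p_n|\le\sup_{\widetilde K_n}|g|+\varepsilon_n$, and couple this to a careful geometric selection of the exhaustion (a Carath\'eodory-type condition, guaranteed here by the connectedness of $\C\setminus\Omega$) so that the residual shell $\Omega\setminus K_n$ has vanishing area fast enough to absorb the growth of the bounds. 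Combining the two pieces through the triangle inequality then yields $\|g-p_n\|_{L^2(\Omega)}\to 0$, proving polynomial density. The ancillary density step in the compact case and the tail control in the domain case are the two nontrivial technical points; the remainder is Mergelyan plus measure theory.
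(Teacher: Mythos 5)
The paper does not actually prove this lemma: it is quoted from \cite[Ch.\ 10]{Alpay2015}, and Remark \ref{rmk:densitypolynew} indicates that the intended argument is a duality/reproducing-kernel one (show that $h\perp\mathcal{P}$ in $A^2(\Omega)$ forces $h=0$ by examining the Cauchy transform of $h$ off $\Omega$), which is a different route from yours. Your route has a genuine gap, and it sits exactly where you say the ``hard part'' is. Mergelyan's theorem gives \emph{no} bound whatsoever on the approximating polynomial outside the compact set on which it approximates: a polynomial within $\varepsilon_n$ of $g$ on $K_n$ (or on $\widetilde K_n$) can be arbitrarily large on the shell $\Omega\setminus \widetilde K_n$, so there is no ``growth of the bounds'' to absorb by making the shell's area small --- the quantity $\|p_n\|_{L^2(\Omega\setminus\widetilde K_n)}$ is simply uncontrolled. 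In addition, $\sup_{\widetilde K_n}|g|$ typically tends to $\infty$ as $\widetilde K_n\nearrow\Omega$, since $A^2$ functions need not be bounded, so even the sandwich bound on $\widetilde K_n$ degenerates. The reduction in the compact case has the same defect: mollifying a holomorphic function near $\partial\Omega$ does not produce a function continuous on $\Omega$ and holomorphic in its interior, and the density of boundary-continuous elements in $A^2$ is a statement of essentially the same depth as the lemma itself.

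There is also a structural reason the scheme cannot be repaired as written: your exhaustion argument uses only that $\Omega$ is simply connected (any Riemann map provides compacta $K_n$ with connected complement), so if it worked it would prove polynomial density in $A^2(\Omega)$ for \emph{every} bounded simply connected domain. That is false: density of $\mathcal{P}$ in the Bergman norm is known to fail for certain non-Carath\'eodory simply connected domains (crescent-shaped domains, which do have connected complement). This is precisely why the classical Farrell--Markushevich theorem approximates $\overline\Omega$ from the \emph{outside} by a decreasing sequence of domains and uses the Carath\'eodory condition in an essential way --- a hypothesis the paper itself singles out in Proposition \ref{prop:densitypolynew} and Remark \ref{rmk:densitypolynew}. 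Any correct proof of Lemma \ref{lem:Mergelyan} must exploit the connectedness of $\C\setminus\Omega$ through something sharper than inner exhaustion, e.g.\ the annihilator/Cauchy-transform argument hinted at in the paper.
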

\begin{remark}\label{rmk:densitypolynew}
  Actually, that $\Omega$ is a Carath\'e\-odory domain is a sufficient condition for the conclusion of Proposition \ref{prop:densitypolynew} to hold true.

  However, it turns out that under the present assumptions, 
  $\Omega$ is also a Smirnov domain (bounded by a Jordan curve and such that both the conformal map $\Omega \to \D$ and its inverse are outer $H^p$ functions), see \cite[Lem. 5.1]{BBL2016}, \cite{Duren1970, Grisvard1985}. 
  We then deduce from \cite[Th.\ 10.6]{Duren1970} that $\mathcal{P}$ is also dense in Hardy-Smirnov spaces. This is the framework in which inverse boundary value problems and their extensions to interior data can be set. 


  Finally, in the Hilbert case $p=2$ and if $\C \setminus \Omega$ is 
  connected, the density result holds true from Lemma \ref{lem:Mergelyan} (using the reproducing kernel).  Notice that 
   more can be said for $p$ close to 2, see \cite{Hen2002}.
\end{remark}
Examples of subsets of $\D$ that satisfy the assumptions of the above Proposition \ref{prop:densitypolynew} and Lemma \ref{lem:Mergelyan} are balls $\Omega=a \, \D$ with $0<a<1$ (simply connected radial subsets), and ``angular subsets'' $\Omega=\left\{z\in \D \colon -\theta<\arg(z)< \theta\right\}$, with $\theta\in (0,\pi)$.\\

Let us denote by $H^p(\D)|_{\Omega}$ and $A^p(\D)|_{\Omega}$, the spaces of restrictions on $\Omega\subseteq \D$ of $H^p(\D)$ and $A^p(\D)$ functions, respectively. Notice that $A^p(\D)|_{\Omega}\subset A^p(\Omega)$ is a proper subset if $\Omega \subset \D$ is a strict subset because not all the functions in $A^p(\Omega)$ admit an analytic continuation to $\D$ \cite[Ch.\ 9, Sec.\ 3]{Conway1978}. However:
\begin{proposicion}\label{prop:density}
  Let $1\leq p<\infty$ and let $\Omega\subseteq \D$ be a (non-empty) domain. Then,
\begin{enumerate}
   \item [$(a)$] $H^p(\D)$ is dense in $A^p(\D)$ in $L^p(\D)$ norm.
\end{enumerate}
If besides the assumptions of Proposition \ref{prop:densitypolynew}, Lemma \ref{lem:Mergelyan}, or Remark \ref{rmk:densitypolynew} 
are satisfied, then: 
\begin{enumerate}
   \item [$(b)$] $A^p(\D)|_{\Omega}$ is dense in $A^p(\Omega)$ in $L^p
	 (\Omega)$ norm. 
\end{enumerate}
\end{proposicion}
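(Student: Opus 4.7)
The plan is to reduce both statements to the density of polynomials, which is already at hand through Proposition \ref{prop:densitypolynew} and Lemma \ref{lem:Mergelyan}, and then chain together the obvious inclusions among the spaces in play.

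For part $(a)$, I first observe that every polynomial $P$ lies in $H^p(\D)$: it is continuous, hence bounded, on $\overline{\D}$, so $\|P\|_{H^p(\D)}\leq \sup_{\overline{\D}}|P|<\infty$. Applying Proposition \ref{prop:densitypolynew} with $\Omega=\D$---a bounded simply connected Lipschitz domain---yields that the set $\mathcal{P}$ of polynomials is dense in $A^p(\D)$ in the $L^p(\D)$-norm. Combined with the chain of inclusions $\mathcal{P}\subset H^p(\D)\subset A^p(\D)$ (the second being continuous, as noted in Subsection \ref{subsec:Hardy spaces}), the density of $H^p(\D)$ in $A^p(\D)$ follows at once.

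For part $(b)$, the argument is parallel. Every polynomial belongs to $A^p(\D)$ (again by boundedness on $\overline{\D}$), so its restriction to $\Omega$ lies in $A^p(\D)|_{\Omega}$. This gives the inclusions $\mathcal{P}|_\Omega \subset A^p(\D)|_{\Omega}\subset A^p(\Omega)$. Under whichever of the cited hypotheses is assumed---those of Proposition \ref{prop:densitypolynew}, Lemma \ref{lem:Mergelyan} (for $p=2$), or Remark \ref{rmk:densitypolynew}---the restrictions $\mathcal{P}|_\Omega$ are already dense in $A^p(\Omega)$, and a fortiori so is the intermediate space $A^p(\D)|_{\Omega}$.

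There is essentially no obstacle here: the statement is a direct corollary of polynomial density, and the only thing to monitor is the range of admissible exponents $p$, which is inherited from whichever density result one cites (the full range $1\leq p<\infty$ for Proposition \ref{prop:densitypolynew}, but only $p=2$ for Lemma \ref{lem:Mergelyan}, with an extension to $p$ near $2$ mentioned in Remark \ref{rmk:densitypolynew}). It is worth noting that an alternative, self-contained proof of $(a)$ is available via dilations: for $g\in A^p(\D)$, set $g_r(z)=g(rz)$ with $r<1$, observe that $g_r$ extends holomorphically across $\T$ and hence lies in $H^p(\D)$, and then verify $\|g_r-g\|_{L^p(\D)}\to 0$ as $r\to 1^-$ by continuity of translation in $L^p$; but the polynomial route is shorter given what has just been established.
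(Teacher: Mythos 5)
Your proof is correct and follows essentially the same route as the paper: both parts are obtained by sandwiching $H^p(\D)$ (resp. $A^p(\D)|_\Omega$) between the polynomials and the ambient Bergman space and invoking polynomial density together with closedness of the Bergman space. The only cosmetic difference is that the paper cites Zhu's book for density of $\mathcal{P}$ in $A^p(\D)$, whereas you apply Proposition \ref{prop:densitypolynew} with $\Omega=\D$; both are valid.
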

\begin{proof} 
  $(a)$ Since $\mathcal{P} \subset H^p(\D)\subset A^p(\D)$
    and $\mathcal{P}$ is dense in $A^p(\D)$ (see \cite[Prop. 2.6]{zhu2004}), we have:
\[
   A^p(\D)=\overline{\mathcal{P}}\subseteq \overline{H^p(\D)}
	\subseteq \overline{A^p(\D)}.
\]
Thus, because $A^p(\D)$ is a closed subset of $L^p(\D)$, we get that $\overline{H^p(\D)}=A^p(\D)$ in $L^p(\D)$.

$(b)$ By Proposition \ref{prop:densitypolynew} or Lemma \ref{lem:Mergelyan}, taking the closure in $L^p(\Omega)$, we have:
\[
   A^p(\Omega)=\overline{\mathcal{P}|_\Omega}\subseteq \overline{A^p(\D)|_\Omega}
	 \subseteq \overline{A^p(\Omega)} \text{ in } L^p(\Omega) \, .
\]
Finally, because $A^p(\Omega)$ is a closed subset of $L^p(\Omega)$, we obtain $\overline{A^p(\D)|_{\Omega}}=A^p(\Omega)$ in 
$L^p(\Omega)$.
\end{proof}

\section{Generalized Bergman-Vekua spaces}\label{sec:Bergman-Vekua spaces}

Let now $1< p<\infty$ and let $f \in W^{1,\infty}(\D)$ be a Lipschitz-smooth real valued function on $\D$ 
with $0 <1/k\leq |f|\leq k<1$ in $\D$. The \textit{generalized Bergman-Vekua spaces $A_f^p(\D)$ consists} in those Lebesgue measurable functions $w=w_0+iw_1 \in L^p(\D)$
such that:
\begin{align}\label{eq:Vekua_2}
   \overline{\partial} w=\alpha_f \, \overline{w}, \quad \text{ with }   \alpha_f=\frac{\overline{\partial} f}{f} \in L^\infty(\D) \, ,
\end{align}
in the sense of distributions on $\D$.

Notice that \eqref{eq:Vekua_2} is equivalent to the system of generalized Cauchy-Riemann equations:
\begin{align*} 
   \nonumber \partial_x(fw_1)=-f^2\partial_y \left(\frac{w_0}{f}\right),\quad
	\partial_y(fw_1)=f^2\partial_x\left(\frac{w_0}{f}\right).
\end{align*}
It is worth mentioning that given $\alpha=\alpha_0+i\alpha_1$ such that $\partial_x \alpha_1-\partial_y\alpha_0=0$, then it is possible to construct a real valued function $f$ in terms of $\alpha$ satisfying the $\bar{\partial}$ problem: $\bar{\partial} (\log f) = \alpha$.

Moreover, $A_f^p(\D)$ is a non-trivial subspace over $\R$ because of $f, i/f\in A_f^p(\D)$.
The equation \eqref{eq:Vekua_2} was called the \textit{main Vekua equation} in \cite{Krav2009}, which plays an important role in the theory of pseudo-analytic functions \cite{Bers1953} (generalized analytic functions \cite{Vekua1959}). 



We define some integral operators as in \cite{BLRR2010, GuHaSpr2008}, 
which will be useful in the development of this work. Remind that in the complex case, for $w\in L^p(\D)$ (with $1<p<\infty$), the 
\textit{Teodorescu transform} is defined as follows, for $ z\in \D$:
\begin{align}\label{eq:Teodorescu_operator}
   T_{{\D}}[w](z)=\int_{\D}{\frac{w(\zeta)}{z- \zeta} \, dA(\zeta)}, \end{align}
(using that $d\zeta \land d\bar{\zeta}=-2\pi i dA(\zeta)$). The boundedness of $T_{\D} \colon L^p(\D) \to W^{1,p}(\D)$ as well as the compactness of $T_{\D}$ in $L^p(\D)$ were established in \cite[Prop.\ 5.2.1]{BLRR2010}. Furthermore, the operator $T_{\D}$ is a right inverse of the $\overline{\partial}$ operator, which means $\overline{\partial}T_{\D}[w]=w$, for all $w\in L^p(\D)$ and $1<p<\infty$. 
\begin{lema}\label{lemma:Pf}
Let $\alpha_f \in L^{\infty}(\D)$ and $w\in L^p(\D)$. Then:
\begin{align*}
w\in A_f^p(\D) \Leftrightarrow w-T_{\D}[{\alpha_f}\overline{w}] \in A^p(\D). 
\end{align*}
\end{lema}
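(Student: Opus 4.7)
The plan is to prove both implications using the defining properties of the Teodorescu transform $T_\D$: namely, that it maps $L^p(\D)$ continuously into $W^{1,p}(\D)$ and that it is a right inverse of $\overline{\partial}$ in the sense of distributions, i.e.\ $\overline{\partial} T_\D[u] = u$ for every $u \in L^p(\D)$, $1<p<\infty$. Both facts are stated just above the lemma as consequences of \cite[Prop.\ 5.2.1]{BLRR2010}. The whole argument then reduces to an application of these facts, followed by Weyl's lemma to upgrade distributional holomorphy into genuine membership in $A^p(\D)$.

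First I would observe that $\alpha_f \overline{w} \in L^p(\D)$: this is immediate since $\alpha_f \in L^\infty(\D)$ and $w \in L^p(\D)$. Hence $T_\D[\alpha_f \overline{w}]$ is a well-defined element of $W^{1,p}(\D) \subset L^p(\D)$, so that $g := w - T_\D[\alpha_f \overline{w}]$ lies in $L^p(\D)$ unconditionally. For the forward implication, assume $w \in A_f^p(\D)$, so that $\overline{\partial} w = \alpha_f \overline{w}$ as distributions on $\D$. Applying $\overline{\partial}$ to $g$ and using $\overline{\partial} T_\D[\alpha_f \overline{w}] = \alpha_f \overline{w}$ gives
\[
\overline{\partial} g = \overline{\partial} w - \overline{\partial} T_\D[\alpha_f \overline{w}] = \alpha_f \overline{w} - \alpha_f \overline{w} = 0
\]
in the distributional sense. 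By Weyl's lemma for $\overline{\partial}$, $g$ then coincides almost everywhere with a holomorphic function on $\D$; combined with $g \in L^p(\D)$, this yields $g \in A^p(\D)$.

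For the reverse implication, suppose $g = w - T_\D[\alpha_f \overline{w}] \in A^p(\D)$, so that $\overline{\partial} g = 0$ on $\D$. Then, again using that $T_\D$ is a right inverse of $\overline{\partial}$,
\[
\overline{\partial} w = \overline{\partial} g + \overline{\partial} T_\D[\alpha_f \overline{w}] = 0 + \alpha_f \overline{w} = \alpha_f \overline{w}
\]
as distributions on $\D$. Since $w \in L^p(\D)$ was given, this is exactly the definition of $w \in A_f^p(\D)$ via \eqref{eq:Vekua_2}.

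There is no real obstacle here; the only subtlety is making sure that each identity is understood in the correct distributional sense and that the invocation of Weyl's lemma (to pass from $\overline{\partial} g = 0$ in distributions to $g$ being genuinely holomorphic) is justified, which it is for any $L^p_{\text{loc}}$ function on an open set of $\C$. All other manipulations are linear and use only the $L^p \to W^{1,p}$ boundedness and right-inverse property of $T_\D$ that were recorded just before the lemma statement.
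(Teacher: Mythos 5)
Your proof is correct and follows essentially the same route as the paper's: both use that $\overline{\partial}T_{\D}=I$ to compute $\overline{\partial}(w-T_{\D}[\alpha_f\overline{w}])$, invoke Weyl's lemma to pass from distributional holomorphy to genuine analyticity, and use the $L^p$-boundedness of $T_{\D}$ to place the difference in $L^p(\D)$. Your write-up is in fact slightly more detailed on the converse direction, which the paper states in one line.
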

\begin{proof}
Let $w\in L^p(\D)$. From the fact that $\overline{\partial}T_{\D}=I$, if $w$ is a weak solution of \eqref{eq:Vekua_2} on $\D$, then $\overline{\partial}(w-T_{\D}[{\alpha_f}\overline{w}])=0$ on $\D$, so by the Weyl's Lemma \cite[Th.\ 24.9]{Forster1981} we have that $w-T_{\D}[{\alpha_f}\overline{w}]$ is analytic on $\D$. Moreover, since $T_{\D}\colon L^p(\D)\to L^p(\D)$ and $\alpha_f \overline{w}\in L^p(\D)$, we have  $w-T_{\D}[{\alpha_f}\overline{w}]\in L^p(\D)$. Conversely, if $w-T_{\D}[{\alpha_f}\overline{w}]\in A^p(\D)$, then $w$ is a solution to \eqref{eq:Vekua_2}. 
\end{proof}

\subsection{Properties of $A_f^p(\D)$}\label{subcsec:properties}
In the literature, the term ``similarity principle''  is used to refer to the factorization of pseudo-analytic functions through analytic functions. 
See for example \cite{Bers1953, Colton1980, Vekua1959}, \cite[Sec.\ 4.3]{Krav2009}, for the similarity principle of the general Vekua equation: 
\begin{align}\label{eq:Vekua-general}  
	\overline{\partial}w={\alpha \overline{w}+ \beta w}, 
\end{align}
with coefficients $\alpha $ and $\beta$ satisfying a H\"{o}lder condition and vanishing outside a large disk.

Notice that our Bergman-Vekua spaces $A_f^p(\D)$ are introduced in the particular case of  equation \eqref{eq:Vekua-general}, where $\beta=0$ and $\alpha=\alpha_f$, see equation \eqref{eq:Vekua_2}. 
In this case, another version of the similarity principle was given in \cite[Th.\ 4.2.1]{BLRR2010} for Hardy spaces of generalized analytic functions. Here, we have:
\begin{teorema}(Similarity principle)
\cite{Bers1953, Colton1980, Krav2009, Vekua1959}\label{th:Similarity_principle} 
Let $1< p<\infty$ and let $w\in A_f^p(\D)$. Then $w$ can be represented as
\begin{align}\label{eq:Similarity_principle}
   w(z)=e^{s(z)}\,F(z), \quad z\in \D,
\end{align}
where $s\in W^{1,l}(\D)$ for all $l\in (1,\infty)$ and $F$ is holomorphic in $\D$. Moreover, $F\in A^p(\D)$. 

Reciprocally, let $F\in A^p(\D)$ and $f$ be such that $\alpha_f \in C^{0,\rho}(\overline{\D})$ for $0< \rho < 1$. Then there exists a function $s \in C^{0,\gamma}(\overline{\D})$, with $0<\gamma<1$, such that $w=e^{s} F\in A_f^p(\D)$.
\end{teorema}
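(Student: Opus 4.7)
The plan is the classical Bers--Vekua factorization, obtained via the Teodorescu transform $T_{\D}$ and adapted to the Bergman targets of this paper.

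For the forward direction, I would assume $w\not\equiv 0$ (the case $w\equiv 0$ is trivial) and exploit the fact that
\[
   \phi(z) := \begin{cases} \alpha_f(z)\,\overline{w(z)}/w(z), & w(z)\neq 0, \\ 0, & w(z)=0, \end{cases}
\]
is measurable and bounded by $\|\alpha_f\|_\infty$, hence lies in $L^l(\D)$ for every $l\in(1,\infty)$. Set $s := T_{\D}[\phi]$. The boundedness $T_{\D}\colon L^l(\D)\to W^{1,l}(\D)$ recalled after \eqref{eq:Teodorescu_operator} then gives $s\in W^{1,l}(\D)$ for every such $l$, with $\overline{\partial}s=\phi$ in the sense of distributions. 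Sobolev embedding ($W^{1,l}\hookrightarrow C^{0,1-2/l}$ for $l>2$) makes $s$ continuous and bounded on $\overline{\D}$, so $e^{-s}$ is bounded and $F:=we^{-s}\in L^p(\D)$. A direct Leibniz computation yields $\overline{\partial}F = e^{-s}(\alpha_f\overline{w}-w\phi)$, which vanishes by construction on $\{w\neq 0\}$ and trivially on $\{w=0\}$; Weyl's lemma then makes $F$ holomorphic, so $F\in A^p(\D)$ and $w=e^s F$ as required.

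For the converse direction, with $F\in A^p(\D)$ nontrivial (so $Z(F)$ is discrete by Lemma \ref{prop:Identity principle}) and $\alpha_f\in C^{0,\rho}(\overline{\D})$, I would seek $s\in C^{0,\gamma}(\overline{\D})$ so that $w:=e^s F$ satisfies \eqref{eq:Vekua_2}. Substituting $w=e^s F$ into the Vekua equation on $\D\setminus Z(F)$ and cancelling $e^s F$ reduces the requirement to the \emph{nonlinear} fixed-point equation
\[
   s = T_{\D}\!\left[\alpha_f\,e^{\overline{s}-s}\,\overline{F}/F\right].
\]
The integrand has modulus at most $\|\alpha_f\|_\infty$ on $\D\setminus Z(F)$ (both $e^{\overline{s}-s}$ and $\overline{F}/F$ have modulus one), so the Picard scheme $s_{n+1}=T_{\D}[\alpha_f\,e^{\overline{s_n}-s_n}\,\overline{F}/F]$ started from $s_0=0$ keeps every iterate in a fixed ball of $C^{0,\gamma}(\overline{\D})$ by the standard $L^\infty\to C^{0,\gamma}$ H\"older estimate for $T_{\D}$.

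The main obstacle is existence of this fixed point, which is the heart of the classical similarity principle. A straight Banach contraction works when $\|\alpha_f\|_\infty$ is small, using the Lipschitz bound $|e^{\overline{s}-s}-e^{\overline{t}-t}|\le 2|s-t|$. In general one either localizes by partitioning $\D$ into finitely many small subdomains on which contraction succeeds and patches the resulting local similarity factors, or applies Schauder's theorem together with the compactness of $T_{\D}$ on $L^\infty$; these are the standard devices from \cite{Bers1953,Vekua1959,Colton1980,Krav2009}. The hypothesis $\alpha_f\in C^{0,\rho}(\overline{\D})$ is what yields both the compactness and the targeted H\"older regularity $s\in C^{0,\gamma}(\overline{\D})$. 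Once $s$ is in hand, $e^s$ is bounded so $w=e^s F\in L^p(\D)$, and by design $\overline{\partial}w=\alpha_f\overline{w}$, placing $w$ in $A_f^p(\D)$.
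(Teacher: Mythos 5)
Your outline reproduces the classical Bers--Vekua similarity argument that the paper itself invokes without proof (it defers the forward direction to the analogue of \cite[Th.\ 6.4]{Colton1980} and the converse to \cite[Th.\ 6.5]{Colton1980}), so you are following essentially the same route, with the correct choice $s=T_{\D}[\alpha_f\overline{w}/w]$ and the correct fixed-point equation for the converse. The only points worth making explicit are that the Leibniz computation for $\overline{\partial}(we^{-s})$ needs $w\in W^{1,p}_{\mathrm{loc}}(\D)$, which follows from Lemma \ref{lemma:Pf} because $w-T_{\D}[\alpha_f\overline{w}]$ is holomorphic and $T_{\D}[\alpha_f\overline{w}]\in W^{1,p}(\D)$, and that the existence of the fixed point for arbitrary $\|\alpha_f\|_{L^\infty(\D)}$ (via localization or Schauder plus the compactness of $T_{\D}$) is precisely the content of the cited classical results, to which you, like the authors, correctly defer.
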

We omit the proof of this theorem. Indeed, the proof of the first part is analogous to the one given in the previous references,  see \cite[Th.\ 6.4]{Colton1980}. The second part, a partial converse of the first part, was proved in \cite[Th.\ 6.5]{Colton1980}.
Since $s \in L^\infty(\D)$, it holds that $w \in A_f^p(\D)$ if, and only if, $F \in A^p(\D)$.

A sufficient condition in order to ensure $\alpha_f \in C^{0,\rho}(\overline{\D})$ is that $f \in C^{1,\rho}(\overline{\D})$, together with the boundedness assumptions on $|f|$.

By the Sobolev imbedding Theorem \cite[Cor.\ 9.14]{Brezis2011}, 
if $l>2$ then $W^{1,l}(\D)\hookrightarrow C^{0,\gamma}(\overline{\D})$, with $0<\gamma= 1 - 2/l<1$.

Notice that the function $s$ in the factorization \eqref{eq:Similarity_principle} is a solution of the $\overline{\partial}$-problem: 
\begin{align} \label{eq:s_condition}
   \overline\partial s(z)={\alpha_f}(z) \, \frac{\overline{w(z)}}{w(z)}, \quad \text{ in }\D,
\end{align}
whence $s$ is unique up to a holomorphic additive function. In \cite[Th.\ 4.2.1]{BLRR2010} the function $s$ was given in terms of the Teodorescu operator $T_{\D}$ plus a strategic holomorphic function.
Furthermore,
\begin{align*}
  \|s\|_{L^{\infty}(\D)}\leq 4 \left\|\alpha_f\right\|_{L^{\infty}(\D)} \,.
\end{align*}
The following result is a consequence of {Lemma \ref{lemma:Pf}}.
\begin{proposicion}\label{prop:Bergman-generalized-closed}
Let $1< p<\infty$ and $f$ be such that $\alpha_f \in L^{\infty}(\D)$. Then $A_f^p({\D})$ is closed in $L^p(\D)$.
\end{proposicion}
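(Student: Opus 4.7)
The plan is to exploit Lemma \ref{lemma:Pf}, which characterizes membership in $A_f^p(\D)$ via the analytic correction $w - T_{\D}[\alpha_f \overline{w}]$, together with the closedness of $A^p(\D)$ in $L^p(\D)$ already established in Section \ref{subsec:Bergman spaces}. So the proof should reduce closedness of $A_f^p(\D)$ to continuity of the affine (over $\R$) map $w \mapsto w - T_{\D}[\alpha_f \overline{w}]$ on $L^p(\D)$.

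Concretely, I would take a sequence $\{w_n\} \subset A_f^p(\D)$ with $w_n \to w$ in $L^p(\D)$ and show $w \in A_f^p(\D)$. First, since complex conjugation is an isometry of $L^p(\D)$, $\overline{w_n} \to \overline{w}$ in $L^p(\D)$; and since $\alpha_f \in L^\infty(\D)$, multiplication by $\alpha_f$ is bounded on $L^p(\D)$, yielding $\alpha_f \overline{w_n} \to \alpha_f \overline{w}$ in $L^p(\D)$. Next, invoke the boundedness of the Teodorescu transform $T_{\D} \colon L^p(\D) \to L^p(\D)$ (recalled after \eqref{eq:Teodorescu_operator} from \cite[Prop.\ 5.2.1]{BLRR2010}) to conclude that
\[
   T_{\D}[\alpha_f \overline{w_n}] \to T_{\D}[\alpha_f \overline{w}] \quad \text{in } L^p(\D).
\]
Therefore $w_n - T_{\D}[\alpha_f \overline{w_n}] \to w - T_{\D}[\alpha_f \overline{w}]$ in $L^p(\D)$.

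By Lemma \ref{lemma:Pf}, each term on the left belongs to $A^p(\D)$. Since $A^p(\D)$ is closed in $L^p(\D)$ (\cite[Prop.\ 1.2]{zhu2000}), the limit $w - T_{\D}[\alpha_f \overline{w}]$ also lies in $A^p(\D)$. Applying Lemma \ref{lemma:Pf} once more gives $w \in A_f^p(\D)$, which is what we wanted.

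There is no genuine obstacle here: the statement is essentially a continuity-plus-closedness argument, and all needed ingredients (the equivalence in Lemma \ref{lemma:Pf}, the $L^\infty$ assumption on $\alpha_f$, $L^p$-boundedness of $T_{\D}$, and closedness of $A^p(\D)$) are already in hand. The one place to be a bit careful is in noting that $w \mapsto \alpha_f \overline{w}$ is only $\R$-linear, but this is harmless: continuity on $L^p(\D)$ is all that the closedness argument uses.
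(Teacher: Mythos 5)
Your argument is correct and coincides with the paper's own proof: both reduce the claim to Lemma \ref{lemma:Pf}, the $L^p$-boundedness of $T_{\D}$ (combined with conjugation and multiplication by $\alpha_f \in L^\infty(\D)$), and the closedness of $A^p(\D)$ in $L^p(\D)$. Your remark on the $\R$-linearity of $w \mapsto \alpha_f\overline{w}$ is a sensible extra precaution, but otherwise the two proofs are the same.
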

\begin{proof}
  Let $\left\{w_n\right\}\subseteq A_f^p(\D)$ be a sequence such that $w_n\rightarrow w$ in $L^p(\D)$. By Lemma \ref{lemma:Pf}, $w_n-T_{\D}[\alpha_f\overline{w_n}]\in A^p(\D)$. Further, it converges to $w-T_{\D}[\alpha_f\overline{w}]$ in $L^p(\D)$. Indeed, 
    we have:
\[ 
\|w_n-T_{\D}[\alpha_f\overline{w_n}]-w+T_{\D}[\alpha_f\overline{w}]\|_{L^p(\D)} \leq \|w_n-w\|_{L^p(\D)} + \|T_{\D}[\alpha_f(\overline{w_n}-\overline{w})]\|_{L^p(\D)} \, ,
\]
which goes to 0 as $w_n\rightarrow w$ in $L^p(\D)$, by the continuity of the Teodorescu transform $T_{\D}$.
Consequently, using the closedness of the classical Bergman spaces $A^p(\D)$ in $L^p(\D)$, we get that $w-T_{\D}[\alpha_f \overline{w}]\in A^p(\D)$. Again by Lemma \ref{lemma:Pf}, $w\in A_f^p(\D)$.
\end{proof}

From Proposition \ref{prop:Bergman-generalized-closed} and because closed subsets of reflexive (separable) spaces are reflexive (separable) \cite[Part\ 1, Ch.\ III, Sec.\ 1, Cor.\ 3]{Beauzamy1985} ( \cite[Part\ 1, Ch.\ III, Sec.\ 2, Prop.\ 2]{Beauzamy1985}), we have:
\begin{corolario}\label{cor:reflexive-separable}
Let $\alpha_f\in L^{\infty}(\D)$ and {$1<p<\infty$}. Then $A_f^p(\D)$ is a reflexive and {separable} Banach space.
\end{corolario}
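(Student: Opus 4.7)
The plan is to deduce the conclusion directly from Proposition \ref{prop:Bergman-generalized-closed} by invoking two classical functional-analytic inheritance properties: a closed subspace of a reflexive Banach space is reflexive, and any subspace of a separable metric space is again separable. This is exactly the route signposted by the references to Beauzamy in the statement of the corollary.

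First, I would recall that for $1 < p < \infty$ the ambient space $L^p(\D)$ is both reflexive and separable. Reflexivity follows from the duality $(L^p(\D))^* \simeq L^{p'}(\D)$ with $1/p + 1/p' = 1$, or equivalently from uniform convexity via the Milman--Pettis theorem. Separability is classical as well, since, for instance, polynomials in $x$ and $y$ with rational (complex) coefficients form a countable dense subset of $L^p(\D)$.

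Next, Proposition \ref{prop:Bergman-generalized-closed} asserts that $A_f^p(\D)$ is closed in $L^p(\D)$ whenever $\alpha_f \in L^\infty(\D)$, so $A_f^p(\D)$ is itself a Banach space for the norm induced from $L^p(\D)$. Applying the two inheritance principles just mentioned, $A_f^p(\D)$ inherits both reflexivity and separability from $L^p(\D)$, which yields the claim.

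There is no substantive obstacle here: the corollary is a straightforward packaging of Proposition \ref{prop:Bergman-generalized-closed} with standard Banach space inheritance properties. If anything, the only subtlety worth flagging is the exclusion of the endpoints $p = 1$ and $p = \infty$, which is forced by the failure of reflexivity (and, at $p=\infty$, of separability) of the ambient $L^p(\D)$, and also by the fact that the closedness argument in Proposition \ref{prop:Bergman-generalized-closed} relies on the continuity of the Teodorescu transform on $L^p(\D)$ for $1 < p < \infty$.
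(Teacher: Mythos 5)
Your proposal is correct and follows exactly the route the paper takes: it derives the corollary from the closedness of $A_f^p(\D)$ in $L^p(\D)$ (Proposition \ref{prop:Bergman-generalized-closed}) together with the standard facts that closed subspaces of reflexive Banach spaces are reflexive and subsets of separable metric spaces are separable, which is precisely what the paper cites Beauzamy for. The extra remarks on why $L^p(\D)$ is reflexive and separable, and on the exclusion of the endpoints $p=1,\infty$, are accurate but not needed beyond what the paper already records.
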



\begin{remark}\label{rmk:radial-limit-generalized}
Following Theorem \ref{th:Similarity_principle} and Remark \ref{rmk:radial-limit-Bergman}, functions in the Bergman-Vekua space $A_f^2(\D)$ do not necessarily admit a radial or non-tan\-gen\-tial limit almost everywhere on $\T$.
This represents an important difference between the spaces $A_f^p(\D)$ and the generalized Hardy spaces defined in \cite{BLRR2010}, where the non-tangential limit always exists \cite[Prop.\ 4.3.1]{BLRR2010}. However, many other properties are preserved, for example, 
the zeroes of $w\in A_f^p(\D)$, $w \not \equiv 0$, are isolated in $\D$ \cite[Cor.\ 59]{Krav2009}, {which generalizes the uniqueness principle (Lemma \ref{prop:Identity principle})}.  
\end{remark}
{Furthermore, $G=w_0/f+ifw_1$ is a solution of the conjugate Beltrami equation:
\begin{align}\label{eq:Beltrami}
    \overline{\partial} G=\frac{1-f^2}{1+f^2}\overline{\partial G},
\end{align}
for every $w=w_0+iw_1\in A_f^p(\D)$, which establishes a correspondence between the solutions of \eqref{eq:Vekua_2} and \eqref{eq:Beltrami}. 
If $G\in W^{1,p}(\D)$ is a solution of  \eqref{eq:Beltrami}, then $G$ belongs to the Hardy space of solutions of the conjugate Beltrami equation analyzed in \cite{BLRR2010}. Therefore, $G$ satisfies the maximum principle, that is, $|G|$ cannot assume a relative maximum in $\D$ unless it is constant \cite[Prop.\ 4.3.1]{BLRR2010}.}

We know that given an arbitrary real valued harmonic function in a simply connected domain, we can construct a conjugate harmonic function explicitly such that the resultant pair of harmonic functions represent the real and imaginary parts of a complex holomorphic function. Since the real and imaginary parts of a solution to the main Vekua equation \eqref{eq:Vekua_2} are solutions of stationary Schr\"{o}dinger equations (see Lemma \ref{lemma:conductivity-Sch}), they are called \textit{conjugate metaharmonic functions}, see \cite{Dautray1985, Krav2009}.
\begin{lema}\label{lemma:conductivity-Sch}\cite[Th.\ 33]{Krav2009}\\
Let $w=w_0+iw_1 \in A_f^p(\D)$. Then $w_0$ and $w_1$ are weak solutions in $\D$ of the following equations:
\begin{align*}
\begin{matrix}
  \displaystyle  \nabla \cdot \left(f^2\,\nabla\left(\frac{w_0}{f}\right)\right) &=& \displaystyle 0, \quad
  \nabla \cdot \left(f^{-2}\,\nabla(fw_1)\right) &=& 0,
\end{matrix}
\end{align*}
or, equivalently:
\begin{align*}
  \begin{matrix}
    \displaystyle   \frac{\Delta \, w_0}{w_0}-\frac{\Delta \, f}{f} &=& \displaystyle 0, \quad
  \frac{\Delta\, w_1}{w_1}-\Delta \, \left(\frac{1}{f}\right) \, f &=& 0.
\end{matrix}
\end{align*}
\end{lema}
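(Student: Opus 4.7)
The plan is to derive both the conductivity (divergence) form and the Schr\"odinger form directly from the real-imaginary splitting of the Vekua equation \eqref{eq:Vekua_2}, i.e.\ from the generalized Cauchy--Riemann system
\[
\partial_x(fw_1) = -f^2 \partial_y(w_0/f), \qquad \partial_y(fw_1) = f^2 \partial_x(w_0/f),
\]
already written down in the paper right after \eqref{eq:Vekua_2}. First I would apply $\partial_y$ to the first equation and $\partial_x$ to the second and equate the mixed second derivatives $\partial_x\partial_y(fw_1) = \partial_y\partial_x(fw_1)$, valid in $\mathcal{D}'(\D)$; this yields $\partial_x(f^2\partial_x(w_0/f)) + \partial_y(f^2\partial_y(w_0/f)) = 0$, that is $\nabla\cdot(f^2\nabla(w_0/f)) = 0$. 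For the companion equation in $w_1$, I would solve the same system for the partials of $w_0/f$ in terms of those of $fw_1$,
\[
\partial_x(w_0/f) = f^{-2}\partial_y(fw_1), \qquad \partial_y(w_0/f) = -f^{-2}\partial_x(fw_1),
\]
and cross-differentiate again to obtain $\nabla\cdot(f^{-2}\nabla(fw_1)) = 0$.

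To pass to the Schr\"odinger form I would expand each conductivity equation through the product rule. Using $w_0 = f\cdot(w_0/f)$ one gets the compact identity $f^2\nabla(w_0/f) = f\nabla w_0 - w_0\nabla f$; taking divergence and cancelling the two $\nabla f\cdot\nabla w_0$ cross terms yields
\[
\nabla\cdot(f^2\nabla(w_0/f)) = f\Delta w_0 - w_0\Delta f,
\]
and the first Schr\"odinger identity $\Delta w_0/w_0 - \Delta f/f = 0$ is the formal division of this equation by $fw_0$. The analogue for $w_1$ is obtained by applying the same identity with $f$ and $w_0$ replaced by $1/f$ and $w_1$ respectively: since $(1/f)^2\nabla(w_1/(1/f)) = f^{-2}\nabla(fw_1)$, one gets $\nabla\cdot(f^{-2}\nabla(fw_1)) = (1/f)\Delta w_1 - w_1\Delta(1/f)$, whence $\Delta w_1/w_1 - f\,\Delta(1/f) = 0$.

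The main obstacle is the low regularity of $w$: a priori $w$ lies only in $L^p(\D)$, so every manipulation must be read in the distributional sense. The hypothesis $f\in W^{1,\infty}(\D)$ with $f$ bounded away from zero is precisely what makes the operations legitimate: $f^{\pm 1}$, $f^{\pm 2}$, $\nabla f$ and $\nabla(1/f)$ are all bounded multipliers, the Leibniz rule applies to products of a Lipschitz function with a distribution and its first derivatives, and mixed partial derivatives commute in $\mathcal{D}'(\D)$. The Schr\"odinger formulation is then simply the algebraic rewriting of the weakly satisfied conductivity equations, the formal division by $fw_0$ (resp.\ $fw_1$) serving to identify the Schr\"odinger potentials $\Delta f/f$ and $f\,\Delta(1/f)$.
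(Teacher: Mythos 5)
Your derivation is correct, but note that the paper does not actually prove this lemma: it is imported wholesale as \cite[Th.\ 33]{Krav2009}, so there is no internal proof to compare against. What you supply is the standard self-contained argument, and it is the right one: the generalized Cauchy--Riemann system stated after \eqref{eq:Vekua_2} expresses $f^{-2}\nabla(fw_1)$ as the rotation by $\pi/2$ of $\nabla(w_0/f)$ (and vice versa), so equality of mixed distributional partials immediately kills the divergence of each field, giving the two conductivity equations; the Schr\"odinger form then follows from the identity $\nabla\cdot\left(\sigma^2\nabla(u/\sigma)\right)=\sigma\Delta u-u\Delta\sigma$ applied with $(\sigma,u)=(f,w_0)$ and $(1/f,w_1)$. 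This is essentially Kravchenko's factorization of the conductivity/Schr\"odinger operators through the main Vekua equation, so you are reconstructing the cited proof rather than finding a genuinely new route; the added value is that your version is explicit about where each hypothesis enters.

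Two small points deserve care. First, the ``equivalently'' step is only formal at the stated regularity: for $f\in W^{1,\infty}(\D)$ the quantity $\Delta f$ is merely a first-order distribution, and its product with $w_0\in L^p(\D)$ is not defined in general, so the Schr\"odinger equations $\Delta w_0=(\Delta f/f)\,w_0$ and $\Delta w_1=f\,\Delta(1/f)\,w_1$ require extra smoothness of $f$ (e.g.\ $\Delta f\in L^\infty_{\mathrm{loc}}(\D)$) to be read as honest weak equations; the divergence form is the one that holds under the paper's standing assumptions, and you correctly treat the second form as an algebraic rewriting. Second, the division by $w_0$ (resp.\ $w_1$) is purely notational since these may vanish; the invariant content is $f\Delta w_0-w_0\Delta f=0$, which you state. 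Neither point invalidates the argument, but both are worth saying explicitly if this is to stand as a proof rather than a citation.
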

Notice that whenever $w$ 
is a solution to \eqref{eq:Vekua_2} with real and imaginary parts no identically vanishing, then $w$ can not be harmonic in $\D$ except if $f$ is constant. It is a direct consequence of Lemma \ref{lemma:conductivity-Sch} and the identity $\Delta(f^{-1})=-f^{-2}\,\Delta f+2\,f^{-3}\, |\nabla f|^2$.

\subsection{Dual space of {$A_f^2(\D)$}}\label{subsec:Dual}

By Proposition \ref{prop:Bergman-generalized-closed}, $A_f^2(\D)\subseteq L^2(\D)$ is a real Hilbert space and there exist orthoprojections:
\begin{align*}
   P_f &\colon L^2(\D) \to A_f^2(\D),\\
   Q_f &\colon L^2(\D) \to A_f^2(\D)^{\bot};
\end{align*}
such that $P_f+Q_f=I$ in $L^2(\D)$. We call $P_f$ to the \textit{Bergman-Vekua projection}. 
Expressions of the projection $P_f$ 
can be obtained, that may help to characterize $A_f^2(\D)$ and $A_f^2(\D)^{\bot}$.
\begin{proposicion}\label{prop:expP}
The restriction of $P_f$ to $A_f^2(\D)$ can be written as:  
\begin{equation}\label{eq:reduction-projection}
   P_f \, w = P \, w + Q \, \left[T_{\D}[\alpha_f \overline{w}]\right],
\end{equation}
with $P$ the Bergman projection given by \eqref{eq:Bergman_projection} and $Q=I-P$ its corresponding orthoprojection.
\end{proposicion}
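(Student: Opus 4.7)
The plan is to observe that since $P_f$ is the orthogonal projection of $L^2(\D)$ onto $A_f^2(\D)$, its restriction to $A_f^2(\D)$ is simply the identity. Thus the content of the proposition is the identity
\[
   w = P\,w + Q\!\left[T_{\D}[\alpha_f\overline{w}]\right] \quad \text{for all } w \in A_f^2(\D),
\]
and the proof should reduce to a direct manipulation using Lemma \ref{lemma:Pf}.

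First I would invoke Lemma \ref{lemma:Pf}: for $w \in A_f^2(\D)$, the function $g := w - T_{\D}[\alpha_f\overline{w}]$ belongs to $A^2(\D)$. Since $P$ is the orthogonal projection of $L^2(\D)$ onto $A^2(\D)$, we have $Pg = g$, i.e.
\[
   P\,w - P\!\left[T_{\D}[\alpha_f\overline{w}]\right] = w - T_{\D}[\alpha_f\overline{w}].
\]
Rearranging and using $Q = I - P$ yields
\[
   w = P\,w + \bigl(I - P\bigr)\!\left[T_{\D}[\alpha_f\overline{w}]\right] = P\,w + Q\!\left[T_{\D}[\alpha_f\overline{w}]\right],
\]
which, combined with $P_f w = w$, gives the claimed formula. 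Along the way one should check that $T_{\D}[\alpha_f\overline{w}] \in L^2(\D)$ so that $P$ and $Q$ apply to it; this is immediate from $\alpha_f \in L^\infty(\D)$, $w \in L^2(\D)$, and the boundedness of $T_{\D}$ on $L^p(\D)$ recalled before \eqref{eq:Teodorescu_operator}.

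There is essentially no obstacle here: the whole argument is a one-line consequence of Lemma \ref{lemma:Pf} together with the defining property $P^2 = P$ of the Bergman projection. The only subtlety worth flagging in the write-up is that the identity \eqref{eq:reduction-projection} is stated only as the restriction to $A_f^2(\D)$ and not as a formula for $P_f$ on all of $L^2(\D)$; for a general $g \in L^2(\D)$, the right-hand side $P g + Q[T_\D[\alpha_f \overline{g}]]$ need not coincide with $P_f g$, since one would then have to also subtract off the $A_f^2(\D)^\bot$-component, so no over-claim should be made.
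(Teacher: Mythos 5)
Your argument is correct and is essentially identical to the paper's own proof: both rest on Lemma \ref{lemma:Pf} giving $w - T_{\D}[\alpha_f\overline{w}] = P\left[w - T_{\D}[\alpha_f\overline{w}]\right]$ for $w \in A_f^2(\D)$, followed by the rearrangement via $Q = I - P$ and the observation that $P_f$ acts as the identity on $A_f^2(\D)$. Your added checks (integrability of $T_{\D}[\alpha_f\overline{w}]$ and the caveat that the formula is only claimed on $A_f^2(\D)$) are sound but not a departure from the paper's route.
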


\begin{proof}
By Lemma \ref{lemma:Pf}, we get that $w\in A_f^2(\D)$ if, and only if, 
  $w-T_{\D}[\alpha_f \overline{w}]\in A^2(\D)$, which implies that:
\begin{equation*}
    w-T_{\D}[\alpha_f \overline{w}] = P \, \left[ w-T_{\D}[\alpha_f 
		\overline{w}]\right].\qedhere
\end{equation*}
\end{proof}
From \eqref{eq:Similarity_principle}, for $w \in A_f^2(\D)$, we obtain the following implicit reproducing relation: 
\begin{align*}
w(z) =  e^{s(z)}\,F(z) &= e^{s(z)}\, \langle F{(\cdot)}, K(z, \cdot)\rangle_{L^2(\D)} = e^{s(z)}\, \langle w (\cdot)\, e^{-s(\cdot)}, K(z, \cdot)\rangle_{L^2(\D)}\\
&=\langle w(\cdot) , e^{\overline{s(z)-s(\cdot)}}\, K(z, \cdot)\rangle_{L^2(\D)}.
\end{align*}
However, observe that $s$ depends on $w$.

Unfortunately, we do not know how to characterize the orthogonal complement subset $A_f^2(\D)^{\bot}$ explicitly. Some orthogonal decompositions of generalized Vekua spaces are given in \cite{Sprossig1993}, in the context of Clifford algebras (when $\alpha=0$,  $\beta=\overline{\partial} p/p$ in \eqref{eq:Vekua-general}, and $p$ be a positive $C^{\infty}$ scalar function).

The following invariance formula for the Bergman-Vekua space $A_f^2(\D)$ 
is an immediate consequence of the orthogonality of the projections $P_f$
and $Q_f$:
\begin{align}\label{eq:invariance_formula_f}
   \text{Re }\int_{\D}{g(z)\overline{h(z)} \, dA(z)}=\text{Re }\int_{\D}{g(z)\overline{P_f \, h(z)} \,dA(z)}.
\end{align}
for every $h\in L^2(\D)$ and $g\in A_f^2(\D)$. When $f\equiv 1$, \eqref{eq:invariance_formula_f} reduces to the real part of the known invariance formula for Bergman spaces.

It is well-known that the dual space $(A^p(\D))^*$ is isomorphic to $A^q(\D)$ \cite[Th.\ 1.16]{zhu2000}, for $1< p<\infty$ and $q$ be the conjugate exponent of $p$, i.e. $1/p+1/q=1$. For the generalized Bergman-Vekua spaces, this result still holds for $p=2$:
\begin{proposicion}\label{prop:dual-space}
Let $\alpha_f\in L^{\infty}(\D)$. The dual of $A_f^2(\D)$ coincides with itself.
\end{proposicion}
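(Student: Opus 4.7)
The plan is to deduce the statement directly from Riesz representation, exploiting the fact that $A_f^2(\D)$ is a closed subspace of $L^2(\D)$ (Proposition \ref{prop:Bergman-generalized-closed}) and that the paper already treats it as a real Hilbert space equipped with the inner product $\langle g,h\rangle := \mathrm{Re}\int_{\D} g\,\overline{h}\,dA$. Closedness is what makes the restricted inner product complete, so $A_f^2(\D)$ is itself a real Hilbert space; then Riesz's representation theorem yields $(A_f^2(\D))^* \cong A_f^2(\D)$.

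To make the identification concrete (and to connect it to the projection $P_f$ introduced just above), I would proceed as follows. Let $\ell$ be a bounded $\R$-linear functional on $A_f^2(\D)$. Compose with $P_f$ to obtain a bounded $\R$-linear functional $\tilde{\ell} := \ell \circ P_f$ on $L^2(\D)$ (alternatively, use Hahn--Banach to extend $\ell$). Applying Riesz representation on the real Hilbert space $L^2(\D)$ furnishes some $h \in L^2(\D)$ such that
\begin{equation*}
  \tilde{\ell}(u) \;=\; \mathrm{Re} \int_{\D} u(z)\,\overline{h(z)}\,dA(z), \qquad u \in L^2(\D).
\end{equation*}
Restricting to $g \in A_f^2(\D)$, since $P_f g = g$, we get $\ell(g) = \mathrm{Re}\int_{\D} g\,\overline{h}\,dA$. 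Now invoke the invariance formula \eqref{eq:invariance_formula_f}, which gives
\begin{equation*}
  \ell(g) \;=\; \mathrm{Re} \int_{\D} g(z)\,\overline{(P_f h)(z)}\,dA(z), \qquad g \in A_f^2(\D),
\end{equation*}
so that the element $h_\ell := P_f h \in A_f^2(\D)$ represents $\ell$.

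Finally, I would check that the assignment $\ell \mapsto h_\ell$ is well-defined, $\R$-linear, and an isometric isomorphism onto $A_f^2(\D)$. Uniqueness of $h_\ell$ is immediate: if some $g_0 \in A_f^2(\D)$ satisfies $\mathrm{Re}\int_{\D} g\,\overline{g_0}\,dA = 0$ for all $g \in A_f^2(\D)$, then taking $g = g_0$ forces $g_0 \equiv 0$. The norm identity $\|h_\ell\|_{A_f^2(\D)} = \|\ell\|$ is the standard Riesz computation, using the contraction property of the orthogonal projection $P_f$ in one direction and testing $\ell$ against $g = h_\ell/\|h_\ell\|$ in the other.

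The only genuinely delicate point, and the one worth stating carefully in the proof, is the scalar-field convention: $A_f^2(\D)$ is not a $\C$-linear subspace of $L^2(\D)$ (multiplication by $i$ does not preserve the Vekua equation), so the duality must be phrased over $\R$ via the real inner product $\mathrm{Re}\langle\cdot,\cdot\rangle_{L^2(\D)}$. Once that is acknowledged, the argument is routine Hilbert-space machinery, with the invariance formula \eqref{eq:invariance_formula_f} playing exactly the role that the reproducing relation plays in the classical case $f\equiv 1$.
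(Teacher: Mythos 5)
Your argument is correct and follows essentially the same route as the paper: extend the functional to $L^2(\D)$ (the paper uses Hahn--Banach, you also offer composition with $P_f$, which amounts to the same thing), represent it by some $h\in L^2(\D)$ via Riesz, and then use the invariance formula \eqref{eq:invariance_formula_f} to replace $h$ by $P_f h\in A_f^2(\D)$. Your additional care about the real scalar field and the isometry is a welcome refinement but not a different proof.
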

\begin{proof}
Let $\Psi$ be a real valued bounded linear functional on $A_f^2(\D)$. By the Hahn-Banach extension theorem \cite[Th.\ 5.16]{Rudin1987}, $\Psi$ can be 
extended to $L^2(\D)$. Using that $L^2(\D)$ is the dual space of $L^2(\D)$, there exists a function $h\in L^2(\D)$ such that:
\begin{align*}
   \Psi(g)=\text{Re }\int_{\D}{g(z)\overline{h(z)} \, dA(z)}, \quad \forall g\in 
	 A_f^2(\D) \, .
\end{align*}
By the invariance formula \eqref{eq:invariance_formula_f}, we have:
\begin{align*}
   \Psi(g)=\text{Re }\int_{\D}{g(z)\overline{P_f \, h(z)} \, dA(z)}, \quad \forall g\in A_f^2(\D),
\end{align*}
and the result follows from the fact that $P_f\, h \in A_f^2(\D)$.
\end{proof}
 From the fact that $A_f^2(\D)$ is separable (see Corollary \ref{cor:reflexive-separable}), it is straightforward that $A_f^2(\D)$ has an orthonormal basis \cite[Th.\ 5.11]{Brezis2011}, namely $\{e_n\}\subset A_f^2(\D)$. That is, the space spanned by $\{e_n\}$ is dense in $A_f^2(\D)$.
\begin{ejemplo}
\label{ex:sep}
Assume that $f$ admits the separable form $f(z)=\varrho(x)/\tau(y)$, $z = x+ i y$, where $\varrho$ and $\tau$ are  $C^2$ non-vanishing functions on $[-1,1]$. By \cite[Th. 33]{Campos2012}, any solution of the main Vekua equation \eqref{eq:Vekua_2} in $\D$ can be approximated arbitrarily closely on any compact subset of $\D$ by a finite real linear combination of the usual formal powers. 
Related examples together with approximation issues in Hardy spaces are considered in \cite{BFL2011, FLPS2011} for $f(x,y)=\varrho(x)$.
\end{ejemplo}

\section{Bounded extremal problems in Bergman spaces}\label{sec:BEP-Bergman}

Let $1< p<\infty$. Let $K \subset \D$ and $J=\D \setminus \overline{K}$ be two (non-empty) domains.
Let $h_J\in L^p(J)$, $0<M<\infty$, and let us introduce the family of  functions:
\begin{align}\label{eq:family_BEP}
   \mathcal{F}_{M,h_J}^p:=\left\{g\in A^p(\D) \colon \|h_J-g\|_{L^p(J)}\leq M \right\}. 
\end{align}
Let $h_K\in L^p(K)$. The bounded extremal problem {\textsl{BEP}} in Bergman spaces consists in finding $g_0\in \mathcal{F}_{M,h_J}^p$ such that:
\begin{align}\label{eq:BEP}
   \|h_K-g_0\|_{L^p(K)}=\text{min} \, \left\{\|h_K-g\|_{L^p(K)} \colon g\in \mathcal{F}_{M,h_J}^p\right\} \, . 
\end{align}
Similar {\textsl{BEP}} have been studied for example in \cite{ABL1992, BL1998, BLP1996} in the Hardy spaces $H^p(\D)$ with $1 \leq p \leq \infty$, 
where $K$ and $J$ are complementary subsets of the unit circle $\T$ with  positive measure.
\subsection{Existence and uniqueness}\label{subsec:existence and uniqueness}

Observe first that for the approximation set $\mathcal{F}_{M,h_J}^p$ to be non-empty, it has to be assumed that $M$ is larger than the distance of $h_J$ to the closure of $A^p(\D)|_{J}$ in $L^p(J)$. If $h_J \in A^p(\D)|_{J}$, whence in particular if $h_J = 0$, this is of course granted for all $M >0$ (actually for all $M \geq 0$). We also see from the density result of Proposition \ref{prop:density}, $(b)$, that if $J$ is a bounded simply connected Lipschitz domain or if $p=2$ and $\C \setminus J$  is a connected set, this can also be ensured for all $M >0$ by assuming that $h_J \in A^p(J)$ (see also Proposition \ref{prop:densitypolynew}, Lemma \ref{lem:Mergelyan}, Remark \ref{rmk:densitypolynew}). In such situations, the projection operator $L^p(J) \to A^p(J)$ is bounded, and if $h_J \in L^p(J)$, we can replace it by its projection $h_J^{+}$  onto $A^p(J)$ provided that $M$ is large enough ($M > \|h_J - h_J^{+}\|_{L^p(J)}$).\\

\textsl{In the remaining of this section, we assume that $M>0$, $\infty>p>1$, and $h_J\in L^p(J)$ are such that $\mathcal{F}_{M,h_J}^p \neq \emptyset$.}\\

We need the following result:
\begin{lema}\label{lemma-closed-1}
The approximation set $\mathcal{F}_{M,h_J}^p|_K$ is closed in $L^p(K)$.  
\end{lema}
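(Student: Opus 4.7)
The plan is the standard reflexivity/weak compactness argument. Let $\{\phi_n\}\subset \mathcal{F}_{M,h_J}^p|_K$ be a sequence converging in $L^p(K)$ to some $\phi\in L^p(K)$, with $\phi_n=g_n|_K$ for $g_n\in \mathcal{F}_{M,h_J}^p$. The goal is to exhibit $g\in \mathcal{F}_{M,h_J}^p$ with $g|_K=\phi$.

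First, I would check that $\{g_n\}$ is bounded in $A^p(\D)$. On $K$ the sequence $\{g_n|_K\}$ is bounded in $L^p(K)$ since it converges, and on $J$ the constraint gives $\|g_n\|_{L^p(J)}\leq M+\|h_J\|_{L^p(J)}$. Writing $\|g_n\|_{L^p(\D)}^p=\|g_n\|_{L^p(K)}^p+\|g_n\|_{L^p(J)}^p$ yields a uniform bound in $L^p(\D)$, hence in $A^p(\D)$.

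Next, since $1<p<\infty$, the space $L^p(\D)$ is reflexive; as $A^p(\D)$ is a closed (convex) subspace of $L^p(\D)$, it is both reflexive and weakly closed. Extract a subsequence (still denoted $\{g_n\}$) with $g_n\rightharpoonup g$ weakly in $A^p(\D)$, hence weakly in $L^p(\D)$. Testing against elements of $L^q(\D)$ supported in $K$ (resp.\ $J$) gives the restrictions $g_n|_K\rightharpoonup g|_K$ in $L^p(K)$ and $g_n|_J\rightharpoonup g|_J$ in $L^p(J)$, where $q$ is the conjugate exponent.

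Finally, the strong convergence $g_n|_K\to \phi$ in $L^p(K)$ combined with the weak convergence $g_n|_K\rightharpoonup g|_K$ forces $g|_K=\phi$ by uniqueness of weak limits. On $J$, weak lower semi-continuity of the norm yields
\[
\|h_J-g\|_{L^p(J)}\leq \liminf_{n\to\infty} \|h_J-g_n\|_{L^p(J)}\leq M,
\]
so $g\in \mathcal{F}_{M,h_J}^p$ and $\phi=g|_K\in \mathcal{F}_{M,h_J}^p|_K$, proving closedness. The only mild subtlety is justifying reflexivity of $A^p(\D)$ for all $1<p<\infty$, which follows from the closedness of $A^p(\D)$ in the reflexive space $L^p(\D)$ (analogous to Corollary \ref{cor:reflexive-separable}); all other steps are routine.
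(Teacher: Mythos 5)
Your proof is correct and follows essentially the same route as the paper: uniform boundedness of the lifted sequence from the convergence on $K$ and the constraint on $J$, weak compactness of balls in the reflexive space $L^p(\D)$, and weak closedness of the convex set $\mathcal{F}_{M,h_J}^p$ (which you verify directly via weak lower semicontinuity of the norm on $J$ and identification of the weak and strong limits on $K$). The paper invokes the weak closedness of $\mathcal{F}_{M,h_J}^p$ as a consequence of closedness and convexity rather than re-deriving it, but the argument is the same.
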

\begin{proof} The convex set $\mathcal{F}_{M,h_J}^p$ is a closed subset of $A^p(\D)$, as follows from the closedness of $A^p(\D)$ in $L^p(\D)$ and the weak-closedness of balls in $L^p$ for $1<p<\infty$, \cite[Th.\ 3.17]{Brezis2011}. It is therefore weakly-closed, see \cite[Th.\ 3.7]{Brezis2011}.
  Let then $f_{K,n} \in \mathcal{F}_{M,h_J}^p|_K$ such that $f_{K,n} \to f_K$ in $L^p(K)$. 
  Therefore, there exists $\{f_n\}\subseteq \mathcal{F}_{M,h_J}^p$ such that $f_{K,n}=f_n|_K$ and this sequence is bounded in $L^ p(\D)$, since it converges on $K$ and satisfies the norm constraint on $J$. The weak-compactness of balls in $L^p(\D)$ \cite[Th.\ 3.17]{Brezis2011} yet ensures that $\{f_n\}$ admits a weakly-convergent subsequence, say to some limit $f \in L^p(\D)$. Since $\mathcal{F}_{M,h_J}^p$ is weakly-closed, we must have   $f \in \mathcal{F}_{M,h_J}^p$; furthermore, $f_{K}=f|_K$ then belongs to $ \mathcal{F}_{M,h_J}^p|_K$.
  %
We could also argue using  Montel's theorem \cite{Conway1978}.
\end{proof}
  

\begin{teorema}\label{th:existence-uniqueness-BEP}
  There exists a unique solution $g_0\in \mathcal{F}_{M,h_J}^p$ to the {\textsl{BEP}} \eqref{eq:BEP}.
Moreover, if $h_K \not \in \mathcal{F}_{M,h_J}^p|_K$, then the constraint is saturated: 
\begin{align*}
   \|g_0-h_J\|_{L^p(J)}=M.
\end{align*} 
\end{teorema}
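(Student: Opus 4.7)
I would split the proof into three parts: existence, uniqueness, and constraint saturation.

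For existence, take a minimizing sequence $\{g_n\}\subset\mathcal{F}_{M,h_J}^p$ with $\|h_K-g_n\|_{L^p(K)}\to\mu:=\inf$. The constraint bounds $\{g_n\}$ in $L^p(J)$, and convergence of the defect on $K$ bounds it in $L^p(K)$, so $\{g_n\}$ is bounded in the reflexive space $L^p(\D)$ and admits a weak limit point $g_0\in L^p(\D)$ along a subsequence. Since $\mathcal{F}_{M,h_J}^p$ is convex and norm-closed in $L^p(\D)$ (as in the argument for Lemma~\ref{lemma-closed-1}), it is weakly closed, so $g_0\in\mathcal{F}_{M,h_J}^p$; weak lower semicontinuity of the norm then gives $\|h_K-g_0\|_{L^p(K)}\le\mu$, so $g_0$ is optimal. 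Uniqueness follows from the strict convexity of $L^p$ for $1<p<\infty$ combined with the identity principle (Lemma~\ref{prop:Identity principle}): two minimizers $g_0,g_0'$ would force their half-sum, which lies in $\mathcal{F}_{M,h_J}^p$, to still attain $\mu$, hence $g_0=g_0'$ on $K$ in $L^p(K)$; since both are analytic on $\D$ and agree on the nonempty open set $K$, we conclude $g_0\equiv g_0'$ on $\D$.

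For saturation, note first that $h_K\notin\mathcal{F}_{M,h_J}^p|_K$ forces $\mu>0$, for otherwise $g_0|_K=h_K$ would place $h_K$ in $\mathcal{F}_{M,h_J}^p|_K$. Assume for contradiction $\|g_0-h_J\|_{L^p(J)}<M$. Then for every $\psi\in A^p(\D)$ the perturbation $g_0+t\psi$ stays feasible for all sufficiently small $|t|$; optimality of $g_0$ at $t=0$ for the smooth convex map $t\mapsto\|h_K-g_0-t\psi\|_{L^p(K)}^p$, together with variations along $\pm\psi$ and $\pm i\psi$, yields the first-order orthogonality
\[\int_K |h_K-g_0|^{p-2}\overline{(h_K-g_0)}\,\psi\,dA=0,\qquad\forall\,\psi\in A^p(\D).\]
The strategy is then to produce a competitor $g\in A^p(\D)$ with $\|h_K-g\|_{L^p(K)}<\mu$: the convex combination $(1-\lambda)g_0+\lambda g$ stays feasible for $\lambda>0$ small (by non-saturation) and strictly lowers the $L^p(K)$-defect, contradicting the optimality of $g_0$.

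Producing such a competitor is the main obstacle. In the Hardy analogue of \cite{ABL1992,BLP1996,BL1998}, density of restrictions in $L^p(K)$ yields it for free from $h_K\notin H^p(\D)|_K$; in the Bergman setting, Proposition~\ref{prop:density}$(b)$ only provides density of $A^p(\D)|_K$ in $A^p(K)$, not in $L^p(K)$. I would therefore couple that density with the orthogonality above---equivalently, when $p=2$, with the vanishing $P(\chi_K(h_K-g_0))=0$ of the Bergman projection---and with the closedness of $\mathcal{F}_{M,h_J}^p|_K$ in $L^p(K)$ from Lemma~\ref{lemma-closed-1}, to extract from the hypothesis $h_K\notin\mathcal{F}_{M,h_J}^p|_K$ an $A^p(\D)$-competitor whose defect on $K$ drops strictly below $\mu$.
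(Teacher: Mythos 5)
Your existence and uniqueness arguments are correct and essentially coincide with the paper's: the paper compresses your minimizing-sequence / weak-compactness / weak-closedness step into a single appeal to Beauzamy's theorem on best approximation from closed convex subsets of uniformly convex spaces (applied to $\mathcal{F}_{M,h_J}^p|_K$ inside $L^p(K)$, via Lemma \ref{lemma-closed-1}), and it obtains uniqueness exactly as you do, from strict convexity of $L^p(K)$ together with the identity principle (Lemma \ref{prop:Identity principle}).

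The saturation clause is where the genuine gap lies, and you have correctly located the obstruction but not resolved it --- and it cannot be resolved under the stated hypotheses. If the constraint is inactive, i.e.\ $\|g_0-h_J\|_{L^p(J)}<M$, then a whole $A^p(\D)$-ball around $g_0$ is feasible, so $g_0$ is a local, hence by convexity a global, minimizer of $g\mapsto\|h_K-g\|_{L^p(K)}$ over all of $A^p(\D)$; consequently the competitor $g$ with $\|h_K-g\|_{L^p(K)}<\mu$ that your final paragraph hopes to extract simply does not exist, and no coupling of the first-order condition with Proposition \ref{prop:density}$(b)$ will produce one. The contradiction has to come from showing $\mu=0$, which requires $h_K$ to lie in the closure of $A^p(\D)|_K$ in $L^p(K)$ --- and for general $h_K\in L^p(K)$ this fails, as does the theorem's ``moreover'' clause itself. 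Concretely, take $p=2$, $h_J=0$, $M=1$, and $h_K=v$ a nonzero element of $L^2(K)$ orthogonal to the closed proper subspace $A^2(K)$; then $\|h_K-g\|_{L^2(K)}^2=\|v\|_{L^2(K)}^2+\|g\|_{L^2(K)}^2$ for every $g\in A^2(\D)$, so $g_0\equiv 0$ is the unique solution of \eqref{eq:BEP}, $h_K\notin\mathcal{F}_{1,0}^2|_K$, and yet $\|g_0-h_J\|_{L^2(J)}=0\neq 1$. For what it is worth, the paper's own proof is silent on saturation. The clause does become true under the additional hypothesis that $h_K$ belongs to the $L^p(K)$-closure of $A^p(\D)|_K$ (e.g.\ $h_K\in A^p(K)$ with $K$ as in Proposition \ref{prop:density}$(b)$, which is precisely the standing assumption of the constructive Section on $p=2$): then inactivity of the constraint would force $\mu=\operatorname{dist}_{L^p(K)}\bigl(h_K,A^p(\D)|_K\bigr)=0$, hence $g_0|_K=h_K\in\mathcal{F}_{M,h_J}^p|_K$, a contradiction. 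You should add that hypothesis rather than try to complete the argument as proposed.
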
 
\begin{proof}
Since $\mathcal{F}_{M,h_J}^p|_K$ is a closed and convex subset of the uniformly convex space  $L^p(K)$, there exists a best approximation projection on $\mathcal{F}_{M,h_J}^p|_K$, see \cite[Part\ 3, Ch.\ II, Sec.\ 1, Prop.\ 5]{Beauzamy1985}. 
This ensures the existence of a minimizing element $g_0\in \mathcal{F}_{M,h_J}^p$, while uniqueness is then granted by uniform convexity and  Lemma \ref{prop:Identity principle}. 
\end{proof}
Notice that the last conclusion still holds if $h_K \in \mathcal{F}_{M,h_J}^p|_K$ and but is such that the function $h \in \mathcal{F}_{M,h_J}^p$ such that $h|_K = h_K$ satisfies $\|h-h_J\|_{L^p(J)}=M$; in this case of course $g_0 = h$.

The {\textsl{BEP}} \eqref{eq:BEP} is a particular extremal problem of the family studied in \cite{Chalender2004, Chalender2011} in Banach spaces, of which constructive aspects are developed in \cite[Sec. 7.3]{Chalender2011}.
We now consider its solution when $p=2$.
	
\subsection{Constructive aspects for $p=2$}\label{subsec:Hilbert case}

Let $p=2$. Let $M >0$, $h_J \in A^2(J)$ and $h_K \in A^2(K) \setminus \mathcal{F}_{M,h_J}^2|_K$.
\begin{teorema}\label{th:Theorem-solution-BEP}
The solution $g_0$ to the {\textsl{BEP}} \eqref{eq:BEP} is given by:
\begin{align}\label{eq:solution_p_2}
   g_0=(I+\lambda \,P \,\chi_J)^{-1} \, P(h_K\vee (\lambda+1)\,h_J)\, ,
\end{align}
   for the unique $\lambda\in (-1,\infty)$ such that $\|g_0-h_J\|_{L^2(J)}=M$.
\end{teorema}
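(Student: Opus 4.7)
The plan is to solve \eqref{eq:BEP} by Lagrangian duality in the Hilbert space $A^2(\D)$. Since $h_K \notin \mathcal{F}_{M,h_J}^2|_K$, Theorem~\ref{th:existence-uniqueness-BEP} tells us that the constraint is saturated at the optimum, so \eqref{eq:BEP} is equivalent to the equality-constrained minimization with Lagrangian
\begin{equation*}
   L(g,\mu) = \|h_K-g\|_{L^2(K)}^2 + \mu\,\bigl(\|h_J-g\|_{L^2(J)}^2 - M^2\bigr).
\end{equation*}
Since $L(\cdot,\mu)$ is strictly convex on the closed convex set $A^2(\D) \subset L^2(\D)$, convex duality yields a multiplier $\mu>0$ (strict positivity follows from $h_K \notin \mathcal{F}_{M,h_J}^2|_K$, as $\mu=0$ would place the unconstrained minimizer in $\mathcal{F}_{M,h_J}^2$) such that $g_0$ is the unique unconstrained minimizer of $L(\cdot,\mu)$ over $A^2(\D)$. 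The first-order condition, after testing along $h$ and $ih$, reads
\begin{equation*}
   \langle g_0-h_K, h\rangle_{L^2(K)} + \mu\,\langle g_0-h_J, h\rangle_{L^2(J)} = 0, \quad \forall\, h\in A^2(\D),
\end{equation*}
which says that $\chi_K(g_0-h_K) + \mu\chi_J(g_0-h_J) \in A^2(\D)^{\bot}$ inside $L^2(\D)$.

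Applying the Bergman projection $P$ to this orthogonality, using $Pg_0=g_0$ and $\chi_K=1-\chi_J$, I would get
\begin{equation*}
   g_0 + (\mu-1)\,P(\chi_J g_0) = P\bigl(\chi_K h_K + \mu\chi_J h_J\bigr) = P\bigl(h_K\vee \mu h_J\bigr),
\end{equation*}
and the substitution $\lambda := \mu-1 \in (-1,\infty)$ gives $(I+\lambda P\chi_J)\,g_0 = P\bigl(h_K\vee(\lambda+1)h_J\bigr)$. To invert, I would consider $T := P\chi_J$ as an operator on $A^2(\D)$: it is self-adjoint, since for $u,v\in A^2(\D)$ one has $\langle P\chi_J u,v\rangle_{L^2(\D)}=\langle \chi_J u,v\rangle_{L^2(\D)}=\langle u,\chi_J v\rangle_{L^2(\D)}=\langle u,P\chi_J v\rangle_{L^2(\D)}$, and it satisfies $\langle Tg,g\rangle_{L^2(\D)} = \|g\|_{L^2(J)}^2 \in [0,\|g\|_{L^2(\D)}^2]$, so $\sigma(T)\subset[0,1]$ and $I+\lambda T$ is boundedly invertible for every $\lambda>-1$. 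This establishes \eqref{eq:solution_p_2}.

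It remains to prove existence and uniqueness of the multiplier. I would set $g_\lambda := (I+\lambda P\chi_J)^{-1}P\bigl(h_K\vee(\lambda+1)h_J\bigr)$ and $\phi(\lambda):=\|g_\lambda-h_J\|_{L^2(J)}$. Because the resolvent $\lambda\mapsto(I+\lambda P\chi_J)^{-1}$ depends analytically on $\lambda$ in the operator norm on $(-1,\infty)$, the map $\phi$ is continuous. Strict monotone decrease of $\phi$ can be obtained by pairing the optimality conditions associated with two distinct multipliers $\mu_1<\mu_2$ and using the strict convexity of the quadratic objective, which forces $\phi(\lambda_2)<\phi(\lambda_1)$ whenever $g_{\lambda_1}\neq g_{\lambda_2}$. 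As $\mu\to 0^+$, $g_\lambda$ approaches the unconstrained best $L^2(K)$-approximant of $h_K$ in $A^2(\D)$, whose $L^2(J)$-distance to $h_J$ exceeds $M$ (else $h_K\in \mathcal{F}_{M,h_J}^2|_K$, contradicting the hypothesis); as $\mu\to\infty$, $\phi(\lambda)$ tends to $\mathrm{dist}_{L^2(J)}\bigl(h_J,\overline{A^2(\D)|_J}\bigr)<M$, guaranteed to be strictly below $M$ because $\mathcal{F}_{M,h_J}^2\neq\emptyset$. The intermediate value theorem then furnishes the unique $\lambda\in(-1,\infty)$ with $\phi(\lambda)=M$. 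I expect the main obstacle to be this boundary analysis of $\phi$ near $\mu=0^+$, since in the Bergman setting the unconstrained $L^2(K)$-best approximant from $A^2(\D)$ may fail to be attained and needs to be handled as a weak/sequential limit, relying on the weak-closedness arguments already used in Lemma~\ref{lemma-closed-1}.
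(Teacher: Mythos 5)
Your proposal is correct and follows essentially the same route as the paper: both arrive at the critical-point equation $(I+\lambda P\chi_J)\,g_0=P\bigl(h_K\vee(\lambda+1)h_J\bigr)$ and then invert $I+\lambda\,\mathcal{T}_{\chi_J}$ using the self-adjointness of the Toeplitz operator $\mathcal{T}_{\chi_J}=P(\chi_J\,\cdot)$ and the inclusion of its spectrum in $[0,1]$. The only difference is that the paper obtains the first-order condition and the existence and uniqueness of the saturating $\lambda$ by citing \cite[Th.\ 7.3.2]{Chalender2011}, whereas you re-derive them from scratch via Lagrangian duality and an intermediate-value argument for $\phi(\lambda)=\|g_\lambda-h_J\|_{L^2(J)}$; the boundary analysis near $\mu\to 0^+$ that you flag is indeed the delicate point, but it is exactly what the cited theorem encapsulates.
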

\begin{proof}
  For $p=2$, the {\textsl{BEP}} \eqref{eq:BEP} has a solution given by \cite[Th.\ 7.3.2]{Chalender2011}, since $P \chi_K\cdot$ and $P \chi_J\cdot$ are the adjoint operators of the restriction operators from $A^2(\D)$ to $A^2(K)$ and $A^2(J)$, respectively.
  This is  to the effect that:
  \begin{align} \label{eq:approximation_p_2}
    (\lambda +1) \, P(\chi_J g_0 -0\vee h_J)=-P(\chi_K g_0 -h_K \vee 0)\, , \end{align}
  holds true for the unique $\lambda> -1$ such that $\|g_0-h_J\|_{L^2(J)}=M$.
  This can also be written as:
  \[
  (I+\lambda \,P \,\chi_J) \, g_0=P(h_K\vee (\lambda+1)\,h_J)\, . \] 
    The Toeplitz operator ${\cal T}_\phi \colon A^2(\D) \to A^2(\D)$, with symbol $\phi \in L^{\infty}(\D)$ is defined as follows, \cite{Lark1986, Vasilevsky2008}:
\begin{align*}  
   {\cal T}_\phi(g)=P(\phi \, g) \, ,
\end{align*}
where $P$ is the Bergman projection, defined  in \eqref{eq:Bergman_projection}. Since the Bergman projection $P$ has norm $1$ in $L^2(\D)$ \cite[Ch.\ 7]{zhu2007}, we have:
\[ 
   \|{\cal T}_\phi \|\leq \|\phi \|_{L^{\infty}(\D)}.
\] 
Take $\phi=\chi_J$. 
Since $\|P(\chi_J \cdot)\|\leq 1$ and the Toeplitz operator ${\cal T}_{\chi_J} \cdot = P(\chi_J \cdot)$ is self-adjoint on $A^2(\D)$, its spectrum is contained in 
$[0,1]$, see \cite{Lark1986}.
Therefore, $I+\lambda \, {\cal T}_{\chi_J}$ is invertible in $A^2(\D)$ for  every $\lambda>-1$. Thus the solution $g_0$ to the {\textsl{BEP}} \eqref{eq:BEP} satisfies \eqref{eq:solution_p_2}. 
\end{proof}
Furthermore, \eqref{eq:approximation_p_2}
implies that if $M\rightarrow 0$, then $\lambda \rightarrow \infty$, while if $M\rightarrow \infty$, then $\lambda\rightarrow -1$.

Observe that formula \eqref{eq:solution_p_2} is similar to the expression obtained in the context of Hardy spaces in \cite[Th.\ 4]{BL1998}. 
\begin{ejemplo}\label{ex:radial}
Suppose that $J$ is a radial subset of $\D$, that is $J=a \, \D$ where $0<a<1$. The radial symbol $\chi_J$ is defined as from \cite[Cor.\ 6.1.2]{Vasilevsky2008}, the Toeplitz operator ${\cal T}_{\chi_J}$ is compact in $A^2(\D)$, an its discrete spectrum is given by
 $  \left\{a^{n+1} \colon n\in 	\mathbb{Z}_+ \right\}\cup \left\{0\right\} \subset [0,1]$.
\end{ejemplo}

\subsection{Bounded extremal problem in Bergman-Vekua spaces}\label{sec:f-BEP}

Let $h_J\in L^p(J)$, $0<M<\infty$, and introduce the family of  functions in the Bergman-Vekua space 
$A_f^p(\D)$:
\begin{align}\label{eq:family_f-BEP}
   \mathcal{F}_{M,h_J}^{f,p}:=\left\{w\in A_f^p({\D}) \colon \|h_J-w\|_{L^p(J)}\leq M \right\}\, ,
\end{align}
that generalizes $ \mathcal{F}_{M,h_J}^{p}=\mathcal{F}_{M,h_J}^{1,p}$. 
For let $h_K\in L^p(K)$, the $f$-bounded extremal problem {\textsl{$f$-BEP}} consists in  finding $w_*\in \mathcal{F}_{M,h_J}^{f,p}$ such that:
\begin{align}\label{eq:f-BEP}
   \|h_K - w_*\|_{L^p(K)}=\text{min} \, \left\{\|h_K-w\|_{L^p(K)} \colon w \in \mathcal{F}_{M,h_J}^{f,p}\right\} \, .
\end{align}
Analogously to {\textsl{BEP}}, 
problem 	\eqref{eq:f-BEP} is still a particular case of the 
	general bounded extremal problems considered in \cite{Chalender2011}.

Let us denote by $T_J$ the operator defined on $L^p(J)$ by $T_J \,  h =T_{\D}(0 \vee h)$ for $h \in L^p(J)$, where $T_\D$ is defined by: \eqref{eq:Teodorescu_operator}. 
Notice that there exists a relation between the families of functions \eqref{eq:family_BEP} and \eqref{eq:family_f-BEP} provided by Lemma \ref{lemma:Pf}. More precisely, if $w\in \mathcal{F}_{M,h_J}^{f,p}$, then $w-T_{\D}[\alpha_f \overline{w}]\in \mathcal{F}_{M^*,h_J^*}^{p}$, with
        $h_J^*=h_J-T_J(\alpha_f \overline{h}_J)$
        and $M^*= M \, \varrho$, where $\varrho$ is equal to the norm of the operator $h \mapsto h-T_J(\alpha_f \overline{h})$  from $L^p(J)$ to itself. 
\begin{teorema}
The approximation set $\mathcal{F}_{M,h_J}^{f,p}|_K$ is closed in $L^p(K)$. Moreover, there exists a unique solution $w_*\in \mathcal{F}_{M,h_J}^{f,p}$ to the {\textsl{f-BEP}} \eqref{eq:f-BEP}.
\end{teorema}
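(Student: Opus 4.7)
The plan is to parallel the proofs of Lemma \ref{lemma-closed-1} and Theorem \ref{th:existence-uniqueness-BEP} given for the classical BEP, systematically replacing $A^p(\D)$ by $A_f^p(\D)$ and invoking the Bergman--Vekua analogs of each ingredient already established in the paper. The two new facts needed beyond the classical case are that $A_f^p(\D)$ is closed in $L^p(\D)$ (Proposition \ref{prop:Bergman-generalized-closed}) and that a function in $A_f^p(\D)$ vanishing on an open subset of $\D$ vanishes identically (a consequence of the similarity principle together with the isolated-zero property, Theorem \ref{th:Similarity_principle} and Remark \ref{rmk:radial-limit-generalized}).

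For closedness, I would first observe that $\mathcal{F}_{M,h_J}^{f,p}$ is a closed convex subset of $A_f^p(\D)$: convexity follows from the $\R$-linearity of $A_f^p(\D)$ together with the convexity of the norm constraint, while closedness follows from Proposition \ref{prop:Bergman-generalized-closed} and the continuity of restriction to $J$. Since $1<p<\infty$, $L^p(\D)$ is reflexive and its balls are weakly compact, so this closed convex set is also weakly closed in $L^p(\D)$. Given $w_{K,n} \in \mathcal{F}_{M,h_J}^{f,p}|_K$ with $w_{K,n} \to w_K$ in $L^p(K)$, I would lift to $w_n \in \mathcal{F}_{M,h_J}^{f,p}$; the sequence $\{w_n\}$ is bounded in $L^p(\D)$ (convergence on $K$ gives boundedness there, and the norm constraint bounds it on $J$), so along a subsequence it converges weakly in $L^p(\D)$ to some $w \in \mathcal{F}_{M,h_J}^{f,p}$, with $w|_K = w_K$, which closes the argument.

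For existence and uniqueness, $\mathcal{F}_{M,h_J}^{f,p}|_K$ is now a closed convex subset of the uniformly convex Banach space $L^p(K)$, so best approximation of $h_K$ exists and is unique in $L^p(K)$-norm, yielding a minimizing element $w_* \in \mathcal{F}_{M,h_J}^{f,p}$. To promote this to uniqueness of $w_*$ in $A_f^p(\D)$, I would use the $\R$-linearity of the Vekua equation \eqref{eq:Vekua_2}: the difference $w_*^1 - w_*^2$ of two minimizers again belongs to $A_f^p(\D)$ and vanishes a.e.\ on the open set $K$. By the similarity principle (Theorem \ref{th:Similarity_principle}) it factors as $e^s F$ with $F \in A^p(\D)$ holomorphic and $e^s$ continuous and non-vanishing; hence $F$ vanishes on $K$, and the classical identity principle (Lemma \ref{prop:Identity principle}) forces $F \equiv 0$, so $w_*^1 = w_*^2$. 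The main obstacle is precisely this last step, namely ensuring injectivity of the restriction map $A_f^p(\D) \to L^p(K)$, since the factor $e^s$ in the similarity decomposition itself depends on the function and cannot be subtracted naively; the trick is to apply the similarity principle only after taking the difference, which reduces matters to the classical analytic setting.
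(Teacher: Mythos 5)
Your proposal is correct and follows essentially the same route as the paper: closedness of $\mathcal{F}_{M,h_J}^{f,p}|_K$ by the weak-compactness argument of Lemma \ref{lemma-closed-1} (using Proposition \ref{prop:Bergman-generalized-closed} in place of the closedness of $A^p(\D)$), existence by projection onto a closed convex subset of the uniformly convex space $L^p(K)$, and uniqueness via the uniqueness principle for $A_f^p(\D)$. The only cosmetic difference is that you re-derive that uniqueness principle from the similarity principle applied to the difference of two minimizers (correctly noting that one must take the difference before factoring), whereas the paper simply invokes the isolated-zeros property quoted in Remark \ref{rmk:radial-limit-generalized}.
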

\begin{proof}
Analogously to the proof of Lemma \ref{lemma-closed-1}, we obtain that $\mathcal{F}_{M,h_J}^{f,p}|_K$ is closed in $L^p(K)$. Thus the existence of the minimizing element comes from \cite[Part\ 3, Ch.\ II, Sec.\ 1, Prop.\ 5, \ 8]{Beauzamy1985}.  
Meanwhile, the uniqueness is proved as in Theorem \ref{th:existence-uniqueness-BEP}, using the generalized uniqueness principle for functions in $A_f^p(\D)$ (see Remark \ref{rmk:radial-limit-generalized}).
\end{proof}

By Proposition \ref{prop:dual-space} and the invariance formula \eqref{eq:invariance_formula_f}, we have that $P_f(\chi_J\cdot)$ and $P_f(\chi_K\cdot)$ are the adjoint operators of the restriction operators acting from $A_f^2(\D)$ to $A_f^2(J)$ and $A_f^2(K)$, respectively.
From the same reasons, $P_f(\chi_{\Omega}\cdot)$ is self-adjoint on $A^2_f(\D)$, for a (non-empty) domain $\Omega\subseteq \D$.
	We thus expect the solution $w_*$ to the {\textsl{$f$-BEP}} \eqref{eq:f-BEP} to be given in terms of the Bergman-Vekua projection $P_f$ in a similar manner than the solution $g_0$ to the {\textsl{BEP}} \eqref{eq:BEP} in the Bergman space  in terms of the Bergman projection $P$ in Theorem \ref{th:Theorem-solution-BEP}.

\section{Conclusion}
\label{sec:conclu}
In order to generalize the case $f\equiv 1$, boundedness and spectral properties of generalized Toeplitz operators  $P_f(\chi_\Omega \cdot)$  remain to be established 
in the Bergman-Vekua space $A_f^2(\D)$.

We possess little information about the Vekua-Bergman kernel of $A_f^p(\D)$, but when the conductivity has a separable form
(Example \ref{ex:sep}), some density properties could be expected, because the role of polynomials in 
$A^p(\D)$ is now played by the formal powers. 

Constructive aspects of \textsl{BEP} for $p \neq 2$ and of {\textsl{$f$-BEP}} will be considered, together with related numerical and resolution issues.

The present study could be extended to more general bounded simply connected Lipschitz domains, using conformal mappings \cite{Pommerenke1992}.

The results of Sections \ref{sec:Bergman-Vekua spaces} and \ref{sec:f-BEP} are still valid in Bergman spaces of solutions to the general Vekua equation \eqref{eq:Vekua-general}, with {$\alpha, \beta \in L^{\infty}(\D)$}.

\end{document}